\newtheorem{theorem}{Theorem}[section]
\newtheorem{corollary}[theorem]{Corollary}
\newtheorem{lemma}[theorem]{Lemma}
\newtheorem{proposition}[theorem]{Proposition}
\newtheorem{definition-proposition}[theorem]{Definition-Proposition}
\newtheorem{question}[theorem]{Question}
\theoremstyle{definition}
\newtheorem{definition}[theorem]{Definition}
\newtheorem{example}[theorem]{Example}
\newcommand{\Ext}{\operatorname{Ext}\nolimits}
\newcommand{\Hom}{\operatorname{Hom}\nolimits}
\newcommand{\End}{\operatorname{End}\nolimits}
\newcommand{\Tr}{\operatorname{Tr}\nolimits}
\renewcommand{\mod}{\mathsf{mod}\hspace{.01in}}
\newcommand{\rad}{\operatorname{rad}\nolimits}
\newcommand{\RHom}{\mathbf{R}\strut\kern-.2em\operatorname{Hom}\nolimits}
\numberwithin{equation}{section}
\def\Im{\mathop{\rm Im}\nolimits}
\def\Coker{\mathop{\rm Coker}\nolimits}
\def\Tr{\mathop{\rm Tr}\nolimits}
\def\rad{\mathop{\rm rad}\nolimits}
\def\id{\mathop{\rm id}\nolimits}
\def\pd{\mathop{\rm pd}\nolimits}
\begin{document}
\title{A construction of Gorenstein projective $\tau$-tilting modules}
\thanks{2020 Mathematics Subject Classification: 16G10, 18G25}
\thanks{Keywords: Gorenstein projective module, $\tau$-rigid module, support $\tau$-tilting module }
 \thanks{$*$ is the corresponding author. Both of the authors are supported by NSFC
(Nos. 11671174, 12171207). X. Zhang is supported by the Project Funded by the Priority
Academic Program Development of Jiangsu Higher Education Institutions and the Starting project of Jiangsu Normal University}
\author{Zhi-Wei Li}
\address{Z.W. Li:  School of Mathematics and Statistics, Jiangsu Normal University, Xuzhou, 221116, P. R. China.}
\email{zhiweili@jsnu.edu.cn}
\author{Xiaojin Zhang$^*$}
\address{X. Zhang:  School of Mathematics and Statistics, Jiangsu Normal University, Xuzhou, 221116, P. R. China.}
\email{xjzhang@jsnu.edu.cn, xjzhangmaths@163.com}
\maketitle
\begin{abstract} We give a construction of Gorenstein projective $\tau$-tilting modules in terms of tensor products of modules. As a consequence, we give a class of non-self-injective algebras admitting non-trivial Gorenstein projective $\tau$-tilting modules. Moreover, we show that a finite dimensional algebra $\Lambda$ over an algebraically closed field is $CM$-$\tau$-tilting finite if $T_n(\Lambda)$ is $CM$-$\tau$-tilting finite which gives a partial answer to a question on $CM$-$\tau$-tilting finite algebras posed by Xie and Zhang.
\end{abstract}

\section{Introduction}

 In 2014, Adachi, Iyama and Reiten \cite{AIR} introduced $\tau$-tilting theory as a generalization of tilting theory from the viewpoint of mutation. It has been showed by Adachi, Iyama and Reiten that $\tau$-tilting theory is closely related to silting theory and cluster tilting theory. In $\tau$-tilting  theory, (support) $\tau$-tilting modules are the most important objects. Therefore it is interesting to study (support) $\tau$-tilting modules for given algebras. For recent development on this topics, we refer to \cite{AiH,IZ,KK,W,Z,Zi}.

 On the other hand, Gorenstein projective modules form the main body of Gorenstein homological algebra; their origins can be traced back to Auslander-Bridger's modules of $G$-dimension zero \cite{AuB}. The definition of Gorenstein projective modules over an arbitrary ring was given by Enochs and Jenda \cite{EJ1,EJ2}. From then on, Gorenstein projective modules have gained a lot of attention in both homological algebra and the representation theory of finite-dimensional algebras. Throughout this paper, we focus on the finitely generated Gorenstein projective modules over finite dimensional algbras over an algebraically closed field $K$. For recent development of this topics, we refer to \cite{CSZ,HuLXZ,RZ1,RZ2,RZ3}.

 Recently, Xie and the second author \cite{XZ} combined Gorenstein projective modules with $\tau$-tilting modules and built a bijection map from Gorenstein projective support $\tau$-tilting modules to Gorenstein injective support $\tau^{-1}$-tilting modules which is analog to Adachi-Iyama-Reiten's bijection map from support $\tau$-tilting modules to support $\tau^{-1}$-tilting modules. But there is little reference to show the existence of non-trivial Gorenstein projective (support) $\tau$-tilting modules except support $\tau$-tilting modules over self-injective algebras. It is natural to ask the following question.
 \begin{question}\label{1.0} Are there non-self-injective algebras admitting non-trivial Gorenstein projective support $\tau$-tilting modules?
 \end{question}

 In this note, we give a construction of non-trivial Gorenstein projective (support) $\tau$-tilting modules. As a consequence, we can construct a large class of non-self-injective algebras admitting non-trivial Gorenstein projective (support) $\tau$-tilting modules. Our first main result is the following:

\begin{theorem}\label{1.1}$(\mathrm{Theorem}\  \ref{3.6})$ Let $A$ and $B$ be finite dimensional algebras over an algebraically closed field $K$. Let $M\in\mod B$ be a Gorenstein projective (support) $\tau$-tilting module. Then $A\otimes_K M$ is a Gorenstein projective (support) $\tau$-tilting module in $\mod (A\otimes_K B)$.
\end{theorem}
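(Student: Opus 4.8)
The plan is to verify the two halves of the conclusion separately: that $A\otimes_K M$ is Gorenstein projective over $\Gamma:=A\otimes_K B$, and that it is (support) $\tau$-tilting over $\Gamma$. Throughout, the two workhorses are the exactness of $A\otimes_K-\colon\mod B\to\mod\Gamma$ (since $K$ is a field) and the natural ``box-product'' isomorphism $\Hom_\Gamma(A\otimes_K X,A\otimes_K Y)\cong\Hom_A(A,A)\otimes_K\Hom_B(X,Y)\cong A\otimes_K\Hom_B(X,Y)$, valid for finite-dimensional $X,Y\in\mod B$ and functorial in both arguments; note also that $A\otimes_K Q$ is projective over $\Gamma$ whenever $Q$ is projective over $B$. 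For Gorenstein projectivity I would take a complete resolution of $M$, i.e.\ a totally acyclic complex $\mathbf Q$ of projective $B$-modules having $M$ as a cocycle. Applying $A\otimes_K-$ gives a complex $A\otimes_K\mathbf Q$ of projective $\Gamma$-modules that is exact (exactness of $A\otimes_K-$) and has $A\otimes_K M$ as a cocycle; it remains to see that $\Hom_\Gamma(A\otimes_K\mathbf Q,\Gamma)$ is exact. By the box-product formula this complex is isomorphic to $A\otimes_K\Hom_B(\mathbf Q,B)$, which is exact because $\mathbf Q$ is totally acyclic and $A\otimes_K-$ is exact. Hence $A\otimes_K\mathbf Q$ is totally acyclic and $A\otimes_K M$ is Gorenstein projective.

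Next I would prove $\tau$-rigidity. Let $Q_1\xrightarrow{\,g\,}Q_0\to M\to0$ be a minimal projective presentation over $B$. I would first check that $A\otimes_K Q_1\xrightarrow{\,1\otimes g\,}A\otimes_K Q_0\to A\otimes_K M\to0$ is again a minimal projective presentation over $\Gamma$; this rests on $\rad\Gamma=\rad A\otimes_K B+A\otimes_K\rad B$, which shows that $A\otimes_K-$ preserves projective covers and syzygies, so minimality is inherited. By the Adachi--Iyama--Reiten criterion \cite{AIR}, $M$ is $\tau$-rigid if and only if $\Hom_B(g,M)$ is surjective, and likewise $A\otimes_K M$ is $\tau$-rigid if and only if $\Hom_\Gamma(1\otimes g,A\otimes_K M)$ is surjective. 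Under the natural box-product isomorphism the latter map is identified with $\mathrm{id}_A\otimes_K\Hom_B(g,M)$, which is surjective because $\Hom_B(g,M)$ is surjective and $A\otimes_K-$ preserves surjections.

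It then remains to count indecomposable summands to upgrade $\tau$-rigid to $\tau$-tilting. Since $K$ is algebraically closed the simple $\Gamma$-modules are exactly the $S\otimes_K T$ with $S$ simple over $A$ and $T$ simple over $B$, so $|\Gamma|=|A|\,|B|$, where $|{-}|$ denotes the number of pairwise non-isomorphic indecomposable summands (resp.\ simples). Writing $A=\bigoplus_j P_j^{a_j}$ and a basic $M=\bigoplus_{i=1}^{|B|}M_i$, we have $A\otimes_K M=\bigoplus_{i,j}(P_j\otimes_K M_i)^{a_j}$. Each $P_j\otimes_K M_i$ is indecomposable because $\End_\Gamma(P_j\otimes_K M_i)\cong\End_A(P_j)\otimes_K\End_B(M_i)$ is local over the algebraically closed field $K$; and these modules are pairwise non-isomorphic because restricting a product $X\otimes_K Y$ (with $X$ indecomposable over $A$, $Y$ over $B$) along the embeddings $A\hookrightarrow\Gamma$ and $B\hookrightarrow\Gamma$ yields $X^{\dim_K Y}$ and $Y^{\dim_K X}$, so Krull--Schmidt recovers the isomorphism classes of $X$ and $Y$. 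Thus $A\otimes_K M$ has exactly $|A|\,|B|=|\Gamma|$ non-isomorphic indecomposable summands and is $\tau$-tilting. For the support case, if $(M,P)$ is a support $\tau$-tilting pair over $B$ then $(A\otimes_K M,\,A\otimes_K P)$ is one over $\Gamma$: the module $A\otimes_K P$ is projective, $\Hom_\Gamma(A\otimes_K P,A\otimes_K M)\cong A\otimes_K\Hom_B(P,M)=0$, and $|A\otimes_K M|+|A\otimes_K P|=|A|(|M|+|P|)=|A|\,|B|=|\Gamma|$.

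The anticipated obstacles are technical rather than conceptual. The delicate points are, first, checking the naturality of the box-product isomorphism carefully enough that it intertwines $\Hom_\Gamma(1\otimes g,-)$ with $\mathrm{id}_A\otimes_K\Hom_B(g,-)$, and that $A\otimes_K-$ genuinely carries a minimal projective presentation to a minimal one; and second—this is where algebraic closedness of $K$ is essential—establishing the indecomposability of each $P_j\otimes_K M_i$ through the locality of a tensor product of local finite-dimensional $K$-algebras. The remaining verifications (exactness, surjectivity, and the Krull--Schmidt counting) are then routine.
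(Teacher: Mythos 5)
Your proposal is correct, and its overall skeleton is the same as the paper's: prove that $A\otimes_K-$ preserves Gorenstein projectivity, preserve $\tau$-rigidity via the Adachi--Iyama--Reiten criterion and the isomorphism $\Hom_{A\otimes B}(P\otimes X,P\otimes Y)\cong\End_A(P)\otimes_K\Hom_B(X,Y)$, and then count indecomposable summands using that $\End_A(P_j)\otimes_K\End_B(M_i)$ is local over the algebraically closed field $K$ (this is exactly the content of the paper's Propositions \ref{3.2}, \ref{3.c} and \ref{2.9}). You differ in two sub-steps, both legitimately. For Gorenstein projectivity the paper proves the more general statement that a tensor product of two Gorenstein projective modules is Gorenstein projective (Proposition \ref{3.1}), arguing through the Auslander--Bridger characterization --- vanishing of $\Ext^i(-,A)$ for the module and its transpose/dual together with reflexivity --- and the K\"unneth-type formula $\Ext^m_{A\otimes B}(M\otimes N,A\otimes B)\cong\bigoplus_{i+j=m}\Ext^i_A(M,A)\otimes\Ext^j_B(N,B)$; you instead transport a complete resolution directly, which is shorter for the case at hand and would in fact also give the general statement by tensoring two totally acyclic complexes. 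For the support case the paper explicitly identifies the supporting quotient algebra, proving $(A\otimes B)/(1\otimes e)\cong A\otimes(B/(e))$ via the ideal identity $(a\otimes b)=(a)\otimes(b)$ (Proposition \ref{3.3}) and then counting over that quotient; you work with support $\tau$-tilting pairs $(M,P)$ and verify $\Hom_{A\otimes B}(A\otimes P,A\otimes M)\cong A\otimes_K\Hom_B(P,M)=0$, which bypasses the ideal computation entirely. Your restriction argument for the pairwise non-isomorphism of the summands $P_j\otimes_K M_i$ (restricting to $A$ and to $B$ and applying Krull--Schmidt) is actually more carefully justified than the paper's one-line assertion, and your verification that $A\otimes_K-$ preserves minimality of projective presentations via $\rad(A\otimes_K B)=\rad A\otimes_K B+A\otimes_K\rad B$ fills a detail the paper leaves implicit.
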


As a consequence of Theorem \ref{1.1}, we can give a partial answer to Question \ref{1.0} as follows.

\begin{corollary}\label{1.a} $(\mathrm{Corollary}\  \ref{3.d})$ Let $A$ be a non-semisimple self-injective algebra which is not local. Then there are non-trivial Gorenstein projective support $\tau$-tilting modules in $\mod T_n(A)$.
\end{corollary}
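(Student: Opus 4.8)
The plan is to obtain Corollary~\ref{3.d} as a direct application of Theorem~\ref{1.1}, after supplying one suitable module over $A$. First I would use the algebra isomorphism $T_n(A)\cong T_n(K)\otimes_K A$, where $T_n(K)$ denotes the algebra of upper triangular $n\times n$ matrices over $K$. Thus $T_n(A)=A'\otimes_K B$ with $A'=T_n(K)$ and $B=A$, and Theorem~\ref{1.1} tells me that for any Gorenstein projective support $\tau$-tilting module $M\in\mod A$, the module $T_n(K)\otimes_K M$ is a Gorenstein projective support $\tau$-tilting module in $\mod T_n(A)$. Hence the corollary reduces to producing a single \emph{non-trivial} such module over $A$, that is, a support $\tau$-tilting $A$-module having an indecomposable summand which is Gorenstein projective but not projective.

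Because $A$ is self-injective, projective and injective modules coincide and every module in $\mod A$ is Gorenstein projective; consequently every support $\tau$-tilting $A$-module is automatically Gorenstein projective, and here ``non-trivial'' simply means ``not projective''. So it is enough to find a support $\tau$-tilting $A$-module with a non-projective summand. I may assume $A$ is connected (the general statement following by working inside a connected, non-local component and completing with the remaining indecomposable projectives); since $A$ is not local, its ordinary quiver then has at least two vertices. I would start from the $\tau$-tilting module $A=\bigoplus_i P_i$ and form, following \cite{AIR}, the left mutation $\mu_i^-(A)$ at an indecomposable projective summand $P_i$. This is a support $\tau$-tilting module different from $A$ which retains $\bigoplus_{j\neq i}P_j$ as a direct summand.

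The hard part is to show that this mutation is a genuine $\tau$-tilting module whose new summand is non-projective, rather than the module $\bigoplus_{j\neq i}P_j$ with $P_i$ simply deleted. Deletion occurs exactly when $\bigl(\bigoplus_{j\neq i}P_j,\,P_i\bigr)$ is a support $\tau$-tilting pair, i.e. when $\Hom_A(P_i,\bigoplus_{j\neq i}P_j)=0$, equivalently when there is no path in the quiver from $i$ to any other vertex. This is the main obstacle, and I would rule it out using self-injectivity together with connectedness: if every arrow starting at $i$ were a loop, then all composition factors of $P_i$ would be $S_i$, and self-injectivity ($I_i=P_i$) would force that no arrow ends at $i$ either, so $\{i\}$ would be a connected component, contradicting that $A$ is connected and not local. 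Therefore deletion never happens, so $\mu_i^-(A)=\bigoplus_{j\neq i}P_j\oplus Y$ has exactly $n$ indecomposable summands; being basic and distinct from $A$, its summand $Y$ is isomorphic to no $P_k$ and is thus non-projective. Taking $M:=\mu_i^-(A)$ and invoking Theorem~\ref{1.1} finishes the construction, once one checks that $T_n(K)\otimes_K M$ stays non-trivial: since over the algebraically closed field $K$ the indecomposable projective $T_n(K)\otimes_K A$-modules are precisely the tensor products $P\otimes_K Q$ of indecomposable projectives of the two factors, tensoring the non-projective summand $Y$ with a simple projective $T_n(K)$-module produces a non-projective summand of $T_n(K)\otimes_K M$.
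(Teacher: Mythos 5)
Your overall route is the same as the paper's: identify $T_n(A)\cong T_n(K)\otimes_K A$, reduce to producing one non-trivial Gorenstein projective support $\tau$-tilting module over $A$ itself (using that over a self-injective algebra every module is Gorenstein projective, so ``non-trivial'' just means ``with a non-projective summand''), and then push it up with Theorem~\ref{3.6} together with the $\pd=\infty$ argument of Corollary~\ref{3.b}. Where you genuinely diverge is in the key existence step over $A$. The paper simply invokes the fact (credited to Iyama in the acknowledgements) that a basic \emph{connected} algebra all of whose $\tau$-rigid modules are projective must be local, concluding that a non-local self-injective $A$ has a non-projective $\tau$-rigid module and hence a non-trivial support $\tau$-tilting module. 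You instead prove the instance you need by hand: you perform the left mutation $\mu_i^-(A)$ at an indecomposable projective summand and show, using self-injectivity ($\soc P_i$ simple, $P_i\cong I_i$ when $P_i$ has only $S_i$ as composition factors) plus connectedness, that the mutation cannot degenerate to deleting $P_i$, so the new summand is a non-projective indecomposable. This is correct and has the virtue of being self-contained where the paper relies on an unreferenced ``well-known'' fact; the cost is length and a somewhat informal quiver-combinatorics step (the equivalence between $\Hom_A(P_i,\bigoplus_{j\ne i}P_j)=0$ and the absence of non-loop paths out of $i$ deserves one more line).

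One caveat applies to your argument and, in fact, equally to the paper's: the reduction to connected $A$ is not automatic. You write that one may ``work inside a connected, non-local component,'' but a non-local algebra need not have a non-local block --- e.g.\ $A=K[x]/(x^2)\times K[x]/(x^2)$ is non-semisimple, self-injective and non-local, yet every block is local, and over such an $A$ every support $\tau$-tilting module is projective, so no non-trivial $M\in\mod A$ exists to feed into Theorem~\ref{3.6}. The paper's proof has the identical gap, since the cited fact requires connectedness. So your proof (like the paper's) is complete under the implicit additional hypothesis that $A$ is connected, and otherwise both arguments would need a separate treatment of products of local self-injective algebras.
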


Recall from \cite{XZ} that an algebra is called Cohen-Macaulay-$\tau$-tilting finite ($CM$-$\tau$-tilting finite for short) if it admits finitely many isomorphism classes of indecomposable Gorenstein projective $\tau$-rigid modules.  The $CM$-$\tau$-tilting finite algebras are the generalizations of both algebras of finite Cohen-Macaulay type ($CM$-finite algebras for short) \cite{B, C,LZ1} and $\tau$-tilting finite algebras \cite{DIJ}. As an application of Theorem \ref{1.1}, we get the following characterization of $CM$-$\tau$-tilting finite algebras which gives a partial answer to \cite[Question 5.7]{XZ}.
\begin{theorem}\label{1.2}$(\mathrm {Theorem}\  \ref{3.7})$ Let $A$ be a finite dimensional algebra over an algebraically closed field $K$ and let $n\geq2$ be a positive integer. If $T_n(A)$ is $CM$-$\tau$-tilting finite, then $A$ is $CM$-$\tau$-tilting finite.
\end{theorem}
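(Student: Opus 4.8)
The plan is to prove the contrapositive: assuming $A$ is not $CM$-$\tau$-tilting finite, I will produce infinitely many pairwise non-isomorphic indecomposable Gorenstein projective $\tau$-rigid modules over $T_n(A)$. The starting point is the identification $T_n(A)\cong T_n(K)\otimes_K A$, where $T_n(K)$ is the path algebra of the linearly oriented quiver of type $\mathbb{A}_n$; this lets me feed $A$-modules into the construction of Theorem \ref{3.6} with the two tensor factors taken to be $T_n(K)$ and $A$. I will use the $\tau$-rigid form of that construction, namely that if $N\in\mod A$ is Gorenstein projective and $\tau$-rigid then $T_n(K)\otimes_K N\in\mod T_n(A)$ is again Gorenstein projective and $\tau$-rigid, which is exactly what is established in the course of proving Theorem \ref{3.6} before one passes to counting the number of summands for the $\tau$-tilting conclusion.

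So fix infinitely many pairwise non-isomorphic indecomposable Gorenstein projective $\tau$-rigid $A$-modules $\{M_i\}_{i\in I}$. Writing $T_n(K)=\bigoplus_{j=1}^n P_j$ as a sum of its indecomposable projectives, each supported on an interval of the quiver, one gets $T_n(K)\otimes_K M_i\cong\bigoplus_{j=1}^n(P_j\otimes_K M_i)$ in $\mod T_n(A)$. Viewing a $T_n(A)$-module as a representation of the $\mathbb{A}_n$-quiver valued in $\mod A$, the summand $P_j\otimes_K M_i$ is the representation placing $M_i$ at every vertex in the support of $P_j$ with identity connecting maps; computing morphisms forces $\End_{T_n(A)}(P_j\otimes_K M_i)\cong\End_A(M_i)$, which is local, so each $P_j\otimes_K M_i$ is indecomposable. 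As a direct summand of the Gorenstein projective $\tau$-rigid module $T_n(K)\otimes_K M_i$, and since both Gorenstein projectivity and $\tau$-rigidity pass to direct summands, each $P_j\otimes_K M_i$ is an indecomposable Gorenstein projective $\tau$-rigid $T_n(A)$-module.

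It remains to see that these are infinite in number. Fix the indecomposable projective $P_s$ of $T_n(K)$ whose support is the whole quiver and fix a vertex $k$; the evaluation functor $X\mapsto e_kX$, where $e_k\in T_n(A)$ is the diagonal idempotent at $k$ with $e_kT_n(A)e_k\cong A$, is exact and satisfies $e_k(P_s\otimes_K M_i)\cong M_i$. Hence $P_s\otimes_K M_i\cong P_s\otimes_K M_{i'}$ would force $M_i\cong M_{i'}$, so $\{P_s\otimes_K M_i\}_{i\in I}$ consists of pairwise non-isomorphic indecomposables and witnesses that $T_n(A)$ is not $CM$-$\tau$-tilting finite. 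The only place where content beyond Theorem \ref{3.6} enters is the $\tau$-rigid, as opposed to $\tau$-tilting, form of the tensor construction; granting that, the decomposition and the recovery via $e_k(-)$ are routine, and I expect the argument to go through uniformly for every $n\geq 2$.
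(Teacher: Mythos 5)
Your argument is correct, and its engine is the same as the paper's: the identification $T_n(A)\simeq T_n(K)\otimes_K A$, the tensor construction of Section 3, and the observation that $M\mapsto T_n(K)\otimes_K M$ is injective on isomorphism classes. The difference is where the counting happens. The paper's proof invokes Theorem \ref{3.6} and counts Gorenstein projective \emph{support $\tau$-tilting} modules, which matches the definition of $CM$-$\tau$-tilting finiteness (finitely many indecomposable Gorenstein projective $\tau$-rigid modules) only via an implicit Demonet--Iyama--Jasso-type equivalence between the two finiteness conditions. You instead work directly with indecomposable Gorenstein projective $\tau$-rigid modules, using the $\tau$-rigid forms of the construction (Propositions \ref{3.1} and \ref{3.2} with $P=T_n(K)$), decomposing $T_n(K)\otimes_K M_i$ into the summands $P_j\otimes_K M_i$, checking their indecomposability (which is Proposition \ref{3.c}, though your endomorphism-ring computation $\End_{T_n(A)}(P_j\otimes_K M_i)\simeq\End_A(M_i)$ also works since $\End_{T_n(K)}(P_j)\simeq K$), and recovering $M_i$ by the idempotent truncation $e_k(-)$. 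This costs a little extra bookkeeping but buys a proof that is literally aligned with the stated definition and avoids any appeal to the equivalence of finiteness conditions; both routes are valid.
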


Now we show the organization of this paper as follows:

In Section 2, we recall some preliminaries on Gorenstein projective modules and $\tau$-tilting modules. In Section 3, we show the main results.

Throughout this paper, all algebras are finite-dimensional algebras over an algebraically closed field $K$ and all modules are finitely generated right modules.  We use $\tau$ to denote the Auslander-Reiten translation functor.

\bigskip
\noindent{\bf Acknowledgement} The authors would like to thank Profs. Xiao-Wu Chen, Zhaoyong Huang and Guodong Zhou for useful suggestions. The second author would like to thank Prof. Osamu Iyama for showing the fact a basic connected algebra with all $\tau$-rigid modules projective is local. The authors also want to thank the referee for his/her useful suggestions to improve the paper.

\section{Preliminaries}

In this section, we recall definitions and basic facts on $\tau$-tilting modules, tensor products of algebras and Gorenstein projective modules.

For an algebra $A$, denote by $\mod A$ the category of finitely generated right $A$-modules. We use $\mathcal{P}(A)$ to denote the subcategory of $\mod A$ consisting of projective modules. Now we recall the following definition of Gorenstein projective modules from \cite{EJ1}.

\begin{definition}\label{2.1}Let $A$ be an algebra and $M\in\mod A$. $M$ is called {\it Gorenstein projective}, if there is an exact sequence $\cdots \rightarrow P_{-1} \rightarrow P_{0} \rightarrow P_{1} \rightarrow \cdots$ in $\mathcal{P}(A)$, which stays exact under $\Hom_{A}(-,A)=(-)^{\ast}$, such that $M\simeq\Im ( P_{-1} \rightarrow P_{0})$.
\end{definition}

Denote by $\Omega$ the syzygy functor and $\Tr$ the Auslander-Bridger transpose functor.
The following properties of Gorenstein projective modules \cite{AuB} are essential.

\begin{proposition}\label{2.2} Let $A$ be be an algebra. A module $M\in\mod A$ is {\it Gorenstein projective} if and only if $M\simeq M^{\ast\ast}$ and $\Ext_{A}^{i}(M,A)=\Ext_{A}^{i}(M^{\ast},A)=0$ hold for all $i\geq 1$ if and only if $\Ext_{A}^{i}(M,A)=0$ and $\Ext_{A}^{i}(\Tr M,A)=0$ hold for all $i\geq 1$.
\end{proposition}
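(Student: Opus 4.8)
The plan is to prove the three-way characterization of Gorenstein projective modules in Proposition~\ref{2.2} by relating each homological condition to the defining existence of a complete resolution. The key classical tool is the relationship between the transpose $\Tr M$ and the syzygies and duals of $M$, together with the standard fact that $M$ is Gorenstein projective in the sense of Definition~\ref{2.1} precisely when $M$ admits a complete resolution, i.e. there is an exact complex of projectives $P_\bullet$, remaining exact after applying $(-)^{\ast}=\Hom_A(-,A)$, with $M=\Im(P_{-1}\to P_0)$.

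\medskip\noindent\textbf{Sketch of the argument.} First I would set up the comparison between $\Tr M$ and $M^{\ast}$. Taking a minimal projective presentation $P_1\xrightarrow{f} P_0\to M\to 0$, by definition $\Tr M=\Coker(f^{\ast}\colon P_0^{\ast}\to P_1^{\ast})$, and $\Tr M$ fits into the exact sequence $0\to M^{\ast}\to P_0^{\ast}\to P_1^{\ast}\to \Tr M\to 0$. This immediately yields the standard relations $\Ext_A^{i}(\Tr M,A)\cong\Ext_A^{i-2}(M^{\ast},A)$ for $i\geq 3$, with the low-degree terms handled by hand, and more importantly it shows that $\Ext_A^{1}(\Tr M,A)=0$ together with $\Ext_A^{2}(\Tr M,A)=0$ encodes precisely the bidualizing information $M\simeq M^{\ast\ast}$ and the vanishing of $\Ext_A^{1}(M^{\ast},A)$. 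These dictionary relations are what let me translate between the second and third conditions in the statement.

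\medskip\noindent For the equivalence with Definition~\ref{2.1}, I would argue in two directions. If $M$ is Gorenstein projective via a complete resolution $P_\bullet$, then the left half $\cdots\to P_{-1}\to P_0$ resolves $M$ by projectives and, because $P_\bullet$ stays exact under $(-)^{\ast}$, the sequence $P_0^{\ast}\to P_{-1}^{\ast}\to\cdots$ computes $\Ext_A^{i}(M,A)$ and shows these vanish for $i\geq 1$; simultaneously the cocycle $M^{\ast}$ is itself the image of a complete resolution (the dualized complex shifted), so $M^{\ast}$ is also Gorenstein projective and hence $\Ext_A^{i}(M^{\ast},A)=0$ for $i\geq 1$ as well, and reflexivity $M\simeq M^{\ast\ast}$ follows from exactness of the dualized-then-redualized complex at the relevant spot. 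Conversely, given $M\simeq M^{\ast\ast}$ with $\Ext_A^{i}(M,A)=\Ext_A^{i}(M^{\ast},A)=0$ for all $i\geq 1$, I would splice together a projective resolution of $M$ with the $A$-dual of a projective resolution of $M^{\ast}$ to build the required doubly-infinite acyclic complex, using the two Ext-vanishing hypotheses to verify acyclicity of the complex and of its dual respectively.

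\medskip\noindent The hard part, and the step deserving the most care, is the bookkeeping that converts the vanishing of $\Ext_A^{i}(\Tr M,A)$ into the pair of conditions $M\simeq M^{\ast\ast}$ and $\Ext_A^{i}(M^{\ast},A)=0$; the degree shift by two means the low-degree ($i=1,2$) terms must be tracked explicitly through the four-term exact sequence above rather than folded into a uniform isomorphism, and it is exactly there that the reflexivity isomorphism is extracted. Since these are the foundational Auslander--Bridger facts on modules of $G$-dimension zero \cite{AuB}, I expect the cleanest exposition to cite that reference for the transpose-duality dictionary and then assemble the three equivalences from it, rather than reconstructing the complete-resolution machinery from scratch.
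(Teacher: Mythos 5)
The paper offers no proof of this proposition: it is stated as a known fact and attributed to Auslander--Bridger \cite{AuB}, so there is no in-paper argument to compare against. Your sketch reconstructs precisely the classical argument from that source and is correct in outline: the four-term exact sequence $0\to M^{\ast}\to P_0^{\ast}\to P_1^{\ast}\to\Tr M\to 0$ supplies the dictionary between the second and third conditions, and splicing a projective resolution of $M$ with the $A$-dual of a projective resolution of $M^{\ast}$ (using reflexivity to glue at the junction and the two Ext-vanishing hypotheses for acyclicity of the complex and of its dual) produces the complete resolution required by Definition~\ref{2.1}. One bookkeeping correction: $\Ext_A^{1}(\Tr M,A)$ and $\Ext_A^{2}(\Tr M,A)$ are, respectively, the kernel and cokernel of the natural map $M\to M^{\ast\ast}$, so their vanishing encodes exactly the reflexivity $M\simeq M^{\ast\ast}$ and nothing more; the vanishing of $\Ext_A^{1}(M^{\ast},A)$ corresponds to $\Ext_A^{3}(\Tr M,A)=0$, not to the $i=1,2$ terms as your middle paragraph asserts. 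This misattribution washes out once the degree shift is tracked explicitly, as you yourself flag in the final paragraph, so the overall plan is sound.
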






For a module $M\in \mod A$, denote by $|M|$ the number of pairwise
non-isomorphic indecomposable summands of $M$. We recall the definitions of $\tau$-rigid  modules and $\tau$-tilting modules from \cite{S} and \cite{AIR}.

\begin{definition}\label{2.4}Let $A$ be be an algebra and $M\in\mod A$.
\begin{enumerate}[\rm(1)]
\item We call $M$ {\it $\tau$-rigid} if $\Hom_{A}(M,\tau M)=0$. Moreover, $M$ is called a {\it $\tau$-tilting} module if $M$ is $\tau$-rigid
and $|M|=|A|$.
\item We call $M$ {\it support $\tau$-tilting} if there exists an idempotent $e$ of $A$ such that $M$ is a $\tau$-tilting $A/(e)$-module.
\end{enumerate}
\end{definition}

The following result \cite[Proposition 2.4(c)]{AIR} is essential in this paper.

\begin{proposition}\label{2.5}
 Let $M\in \mod A$ and $P_1(M)\stackrel{f}{\rightarrow}P_0(M)\rightarrow M\rightarrow 0$ be a minimal projective presentation of $M$.
 Then $M$ is $\tau$-rigid if and only if $\Hom_{A}(f,M)$ is epic.
 \end{proposition}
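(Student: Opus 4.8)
The plan is to translate both of the conditions ``$\Hom_A(f,M)$ is epic'' and ``$M$ is $\tau$-rigid'' into one and the same statement, namely the vanishing of the $K$-space $M\otimes_A\Tr M$, and then to read off the equivalence. Here I write $(-)^{\ast}=\Hom_A(-,A)$, so that applying $(-)^{\ast}$ to the minimal projective presentation $P_1(M)\xrightarrow{f}P_0(M)\to M\to 0$ produces the defining presentation $P_0(M)^{\ast}\xrightarrow{f^{\ast}}P_1(M)^{\ast}\to\Tr M\to 0$ of the transpose, while $\tau M=D\Tr M$ with $D=\Hom_K(-,K)$ the standard $K$-duality.

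First I would compute the cokernel of $\Hom_A(f,M)$. For a finitely generated projective module $P$ and any $X\in\mod A$ there is an isomorphism $\Hom_A(P,X)\cong X\otimes_A P^{\ast}$ that is natural in $P$, under which $\Hom_A(f,M)$ is identified with $\mathrm{id}_M\otimes f^{\ast}\colon M\otimes_A P_0(M)^{\ast}\to M\otimes_A P_1(M)^{\ast}$. Applying the right exact functor $M\otimes_A-$ to the presentation of $\Tr M$ above then gives
$$M\otimes_A P_0(M)^{\ast}\xrightarrow{\mathrm{id}_M\otimes f^{\ast}}M\otimes_A P_1(M)^{\ast}\to M\otimes_A\Tr M\to 0,$$
so that $\Coker\Hom_A(f,M)\cong M\otimes_A\Tr M$. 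In particular $\Hom_A(f,M)$ is epic if and only if $M\otimes_A\Tr M=0$.

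It remains to recognise $\tau$-rigidity in the same terms. Since $M$ is a right $A$-module and $\Tr M$ a left $A$-module, the tensor--hom adjunction over the field $K$ yields a natural isomorphism $\Hom_A(M,D\Tr M)\cong D(M\otimes_A\Tr M)$, that is, $\Hom_A(M,\tau M)\cong D(M\otimes_A\Tr M)$. Because $D$ is a duality, and in particular faithful, $\Hom_A(M,\tau M)=0$ if and only if $M\otimes_A\Tr M=0$. Combining this with the cokernel computation of the previous paragraph shows that $M$ is $\tau$-rigid precisely when $\Hom_A(f,M)$ is epic, which is the assertion.

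The substantive points to check are the two isomorphisms and the bookkeeping of left versus right module structures they carry: that $\Hom_A(P,X)\cong X\otimes_A P^{\ast}$ is natural enough in $P$ for the identification of $\Hom_A(f,M)$ with $\mathrm{id}_M\otimes f^{\ast}$ to be legitimate, and that the adjunction isomorphism is genuinely natural. Minimality of the presentation plays no role in the cokernel computation itself; it enters only to guarantee that $\tau M=D\Tr M$ is the Auslander--Reiten translate, so that the whole chain of equivalences is stated for the correct $\tau$. I expect this bookkeeping to be the only real obstacle, the homological content being entirely contained in the right exactness of $M\otimes_A-$ and the exactness of $D$.
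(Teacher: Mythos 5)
The paper does not prove this statement; it is quoted verbatim from \cite[Proposition 2.4(c)]{AIR}, so there is no internal proof to compare against. Your argument is correct and is essentially the standard one underlying that cited result (going back to Auslander--Smal\o{}): identifying $\Coker\Hom_A(f,M)$ with $M\otimes_A\Tr M$ via $\Hom_A(P,X)\cong X\otimes_A P^{\ast}$ and then dualizing to get $\Hom_A(M,\tau M)\cong D(M\otimes_A\Tr M)$ is the same computation usually packaged through the Nakayama functor sequence $0\to\tau M\to\nu P_1\to\nu P_0$. Your closing remark about where minimality enters is also accurate: a non-minimal presentation would add projective summands to $\Tr M$ whose tensor product with $M$ need not vanish.
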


 We also need the following definitions of Gorenstein projective support $\tau$-tilting modules and Gorenstein projective $\tau$-tilting modules \cite{XZ}.

 \begin{definition}\label{2.a} Let $A$ be an algebra and $M\in\mod A$.
\begin{enumerate}[\rm(1)]
\item We call $M$ {\it Gorenstein projective $\tau$-rigid} if it is both $\tau$-rigid and Gorenstein projective.
\item We call $M$ {\it Gorenstein projective $\tau$-tilting} if it is both $\tau$-tilting and Gorenstein projective.
\item We call $M$ {\it Gorenstein projective support $\tau$-tilting} if it is both support $\tau$-tilting and Gorenstein projective.
\end{enumerate}
\end{definition}

Let $A$ and $B$ be algebras over an algebraically closed field $K$. Denote by $A\otimes_{K}B$ the tensor products of algebras. For modules
$M\in \mod A$ and $N\in\mod B$, we have $M\otimes_K N\in \mod A\otimes_K B$.
In the rest of the paper, we use $M\otimes N$ to denote $M\otimes_K N$. We need the following properties on the tensor products of algebras in \cite{CE}.

\begin{lemma}\label{2.6} Let $A$ and $B$ be two algebras with $M_i\in\mod A$ and $N_i\in\mod B$ for $i=1,2$. Then we have the following.
\begin{enumerate}[\rm(1)]
\item $\Hom_{A\otimes B}(M_1\otimes N_1,M_2\otimes N_2)\simeq \Hom_A(M_1,M_2)\otimes\Hom_A(N_1,N_2)$.
\item  $\Ext^{m}_{A\otimes B}(M_1\otimes N_1,M_2\otimes N_2)\simeq \bigoplus_{i+j=m}\Ext^i_A(M_1,M_2)\otimes\Ext^j_A(N_1,N_2)$ holds for $m\geq 1$.
\end{enumerate}
\end{lemma}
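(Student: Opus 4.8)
The plan is to derive both isomorphisms from the Künneth machinery over the field $K$, exploiting throughout that $K$ is a field, so that $-\otimes_K-$ is exact and sends projectives to projectives and no $\Tor$ correction terms ever appear. For part (1), I would first write down the obvious natural comparison map: a pair $(f,g)$ with $f\in\Hom_A(M_1,M_2)$ and $g\in\Hom_B(N_1,N_2)$ induces $f\otimes g\in\Hom_{A\otimes B}(M_1\otimes N_1,M_2\otimes N_2)$, and this assignment is $K$-bilinear, hence factors through a natural $K$-linear map
\[
\Phi\colon \Hom_A(M_1,M_2)\otimes_K\Hom_B(N_1,N_2)\longrightarrow \Hom_{A\otimes B}(M_1\otimes N_1,M_2\otimes N_2).
\]
I would then prove $\Phi$ is bijective by dévissage. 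When $M_1=A$ and $N_1=B$ both sides are naturally $M_2\otimes N_2$ and $\Phi$ is the identity; by additivity in $M_1$ and in $N_1$ this settles the case of finitely generated free $M_1,N_1$. For the general case I would run a two-step five-lemma argument: fixing a finite free presentation $A^{s}\to A^{r}\to M_1\to 0$, the contravariant functors $\Hom_A(-,M_2)$ and $\Hom_{A\otimes B}((-)\otimes N_1,M_2\otimes N_2)$ are left exact, and since $K$ is a field, tensoring the former over $K$ with $\Hom_B(N_1,N_2)$ keeps it left exact; comparing the two resulting left-exact sequences via $\Phi$ and applying the five lemma (first for arbitrary $M_1$ with $N_1$ free, then symmetrically in $N_1$) yields the isomorphism for all finitely generated $M_1,N_1$.

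For part (2), I would compute $\Ext$ through a resolution built from the two factors. Take projective resolutions $P_\bullet\to M_1$ over $A$ and $Q_\bullet\to N_1$ over $B$. Each $P_i\otimes_K Q_j$ is projective over $A\otimes B$ (it is a direct summand of a sum of copies of $A\otimes B$), and since $K$ is a field the Künneth theorem gives $H_n(P_\bullet\otimes_K Q_\bullet)\cong\bigoplus_{i+j=n}H_i(P_\bullet)\otimes_K H_j(Q_\bullet)$; as $P_\bullet$ and $Q_\bullet$ are acyclic in positive degrees, the total complex $\mathrm{Tot}(P_\bullet\otimes_K Q_\bullet)$ is a projective resolution of $M_1\otimes N_1$ over $A\otimes B$. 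I would then write
\[
\Ext^m_{A\otimes B}(M_1\otimes N_1,M_2\otimes N_2)=H^m\big(\Hom_{A\otimes B}(\mathrm{Tot}(P_\bullet\otimes_K Q_\bullet),M_2\otimes N_2)\big).
\]
By part (1), applied in each bidegree to the projectives $P_i,Q_j$, this Hom double complex is naturally isomorphic, compatibly with both differentials, to the tensor product of complexes $\Hom_A(P_\bullet,M_2)\otimes_K\Hom_B(Q_\bullet,N_2)$. Applying the cohomological Künneth theorem over the field $K$ once more yields $\bigoplus_{i+j=m}H^i(\Hom_A(P_\bullet,M_2))\otimes_K H^j(\Hom_B(Q_\bullet,N_2))$, which is precisely $\bigoplus_{i+j=m}\Ext^i_A(M_1,M_2)\otimes_K\Ext^j_B(N_1,N_2)$.

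The main obstacle is the bookkeeping that makes the two Künneth applications legitimate. I must verify that $\mathrm{Tot}(P_\bullet\otimes_K Q_\bullet)$ really is a projective resolution (projectivity of each $P_i\otimes_K Q_j$ over $A\otimes B$ together with acyclicity via the homological Künneth theorem), and, more delicately, that the isomorphism of part (1) is natural enough to intertwine the horizontal and vertical differentials of the Hom double complex, so that its identification with the tensor product complex $\Hom_A(P_\bullet,M_2)\otimes_K\Hom_B(Q_\bullet,N_2)$ holds at the level of complexes and not merely bidegreewise. Everything else is formal exactly because $K$ is a field: all Künneth short exact sequences split canonically and carry no $\Tor$ terms. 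This is the content of the corresponding statements in Cartan--Eilenberg \cite{CE}.
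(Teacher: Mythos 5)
Your argument is correct, but the paper does not prove this lemma at all: it is quoted directly from Cartan--Eilenberg \cite{CE}, and your K\"unneth-based dévissage is precisely the standard proof given there (the base case $M_1=A$, $N_1=B$, additivity, the five lemma for part (1), and the total complex of $P_\bullet\otimes_K Q_\bullet$ as a projective resolution for part (2)), with all the hypotheses you need --- $K$ a field, modules finitely generated over finite-dimensional algebras --- in force. Note only that the right-hand sides in the paper's statement contain typos ($\Hom_A(N_1,N_2)$ and $\Ext^j_A(N_1,N_2)$ should be $\Hom_B$ and $\Ext^j_B$), which your proof silently and correctly fixes.
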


For a right $A$-module $M$, denote by $\pd_A M$ (resp. $\id_A M$) the projective (resp. injective) dimension of $M$. We also need the following on the injective (resp. projective) dimension of tensor products of modules.

\begin{lemma}\label{2.7} Let $A$ and $ B$ be two algebras with $M\in\mod A$ and $N\in \mod B$. Then
\begin{enumerate}[\rm(1)]
\item $\pd_{A\otimes B}M\otimes N=\pd_A M+\pd_B N$
\item $\id_{A\otimes B}M\otimes N=\id_A M+\id_B N$
\end{enumerate}
\end{lemma}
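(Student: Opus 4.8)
The plan is to prove statement (1) first and then deduce statement (2) from it by $K$-duality. Throughout I write $p=\pd_A M$ and $q=\pd_B N$ and assume $M,N\neq 0$. For the upper bound $\pd_{A\otimes B}(M\otimes N)\leq p+q$, I would take projective resolutions $P_\bullet\to M$ and $Q_\bullet\to N$ of lengths $p$ and $q$ and form the total complex $T_\bullet$ of the double complex $P_\bullet\otimes Q_\bullet$. Each term $\bigoplus_{i+j=n}P_i\otimes Q_j$ is projective over $A\otimes B$, since a tensor product of two projective (hence direct summands of free) modules is a direct summand of a free $A\otimes B$-module. Because $K$ is a field, $-\otimes_K-$ is exact and the Künneth theorem carries no correction terms, so $H_n(T_\bullet)\simeq\bigoplus_{i+j=n}H_i(P_\bullet)\otimes H_j(Q_\bullet)$, which equals $M\otimes N$ concentrated in degree $0$. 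Thus $T_\bullet$ is a projective resolution of $M\otimes N$ of length $p+q$, giving the upper bound (and when $p$ or $q$ is infinite this bound is vacuous).

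For the matching lower bound I would invoke Lemma~\ref{2.6}(2). Choose $X\in\mod A$ with $\Ext^p_A(M,X)\neq 0$ and $Y\in\mod B$ with $\Ext^q_B(N,Y)\neq 0$, which exist by the definition of $p$ and $q$. Applying Lemma~\ref{2.6}(2) with $m=p+q$ gives $\Ext^{p+q}_{A\otimes B}(M\otimes N,X\otimes Y)\simeq\bigoplus_{i+j=p+q}\Ext^i_A(M,X)\otimes\Ext^j_B(N,Y)$. Every summand with $i>p$ or $j>q$ vanishes because $\pd_A M=p$ and $\pd_B N=q$; since $i+j=p+q$, the only surviving summand is $\Ext^p_A(M,X)\otimes_K\Ext^q_B(N,Y)$, a tensor product of two nonzero $K$-vector spaces and hence nonzero. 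Therefore $\pd_{A\otimes B}(M\otimes N)\geq p+q$, and combined with the upper bound this proves (1) in the finite case. The infinite case is handled by the same mechanism: if, say, $p=\infty$, then for each $i$ I pick $X_i$ with $\Ext^i_A(M,X_i)\neq 0$ and observe that the summand $\Ext^i_A(M,X_i)\otimes\Hom_B(N,N)$ of $\Ext^i_{A\otimes B}(M\otimes N,X_i\otimes N)$ is nonzero, because $1_N\in\Hom_B(N,N)$; this forces $\pd_{A\otimes B}(M\otimes N)=\infty=p+q$.

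For statement (2) I would argue purely by duality, reusing (1). Let $\DD=\Hom_K(-,K)$ denote the standard $K$-duality, an exact contravariant equivalence satisfying $\id_A L=\pd_{A^{\op}}\DD L$ for every $L\in\mod A$, since it interchanges injective and projective modules and thus turns an injective resolution into a projective resolution. Using the canonical identifications $(A\otimes B)^{\op}\simeq A^{\op}\otimes B^{\op}$ and $\DD(M\otimes N)\simeq\DD M\otimes\DD N$ (the latter coming from $\Hom_K(M\otimes_K N,K)\simeq\Hom_K(M,K)\otimes_K\Hom_K(N,K)$ for finite-dimensional spaces), I would compute $\id_{A\otimes B}(M\otimes N)=\pd_{A^{\op}\otimes B^{\op}}(\DD M\otimes\DD N)$, and then apply part (1) to the algebras $A^{\op}$ and $B^{\op}$ to obtain $\pd_{A^{\op}}\DD M+\pd_{B^{\op}}\DD N=\id_A M+\id_B N$.

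The main obstacle is not conceptual but concentrated in the two places where the field hypothesis is essential: the exactness of the total complex $T_\bullet$, that is, the absence of Künneth correction terms, which holds precisely because $K$ is a field so that all $\Tor^K$ vanish; and the nonvanishing of the tensor product $\Ext^p_A(M,X)\otimes_K\Ext^q_B(N,Y)$ of nonzero $K$-spaces in the lower bound. Both facts are standard, but they are exactly what makes the additivity true over a field, and the formula would fail over a general base. The only additional care required is the bookkeeping for the cases where $p$ or $q$ is infinite, which I handle uniformly by testing against the module $X_i\otimes N$.
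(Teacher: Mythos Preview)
Your argument is correct. The paper does not actually supply a proof of Lemma~\ref{2.7}; it merely records the statement as a known fact (in the spirit of the reference \cite{CE}) alongside Lemma~\ref{2.6}. So there is nothing to compare against, and your write-up fills in what the paper leaves implicit.

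A couple of minor remarks on presentation. In the lower bound you may as well take $X$ and $Y$ to be simple modules: over a finite-dimensional algebra one has $\pd_A M=\sup\{i:\Ext^i_A(M,S)\neq 0\text{ for some simple }S\}$, which guarantees the required witnesses live in $\mod A$ and $\mod B$. Your infinite-dimension argument via $X_i\otimes N$ is fine; note that the appeal to Lemma~\ref{2.6}(2) here uses the $j=i$, $k=0$ summand, so you are implicitly using that the Künneth decomposition also includes the $\Hom$ term in degree zero, which is indeed how the formula in \cite{CE} is stated. The duality reduction for part~(2) is the cleanest route and uses exactly the finite-dimensionality hypotheses already in force throughout the paper.
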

The following results are well-known.

\begin{proposition}\label{2.8} Let $A$ and $B$ be two algebras over an algebraically closed field $K$. Then
\begin{enumerate}[\rm(1)]
\item  $P\otimes Q$ is an indecomposable projective module in $\mod(A\otimes B)$ if $P$ and $Q$ are indecomposable projective in $\mod A$ and $\mod B$, respectively.
\item Every indecomposable projective module in $\mod(A\otimes B)$ has the form $P\otimes Q$, where $P$ and $Q$ are indecomposable projective in $\mod A$ and $\mod B$, respectively.
\item Every simple module in $\mod(A\otimes B)$ has the form $S\otimes S'$, where $S$ and $S'$ are simple modules over $A$ and $B$, respectively.
\end{enumerate}
\end{proposition}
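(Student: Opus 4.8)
The plan is to prove each of the three statements by reducing it to a counting argument on the simple modules and the basic algebra, using the fact that over an algebraically closed field $K$ the tensor product interacts well with decompositions. I would first recall the general principle (from \cite{CE}) that if $\{e_i\}$ is a complete set of primitive orthogonal idempotents for $A$ and $\{f_j\}$ one for $B$, then $\{e_i\otimes f_j\}$ is a complete set of primitive orthogonal idempotents for $A\otimes B$. This single fact essentially encodes all three claims, so the main work is to justify why each $e_i\otimes f_j$ is \emph{primitive}, which is exactly where algebraic closedness enters.

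For part (1), I would argue as follows. If $P=eA$ and $Q=fB$ are indecomposable projective, then $P\otimes Q\simeq (e\otimes f)(A\otimes B)$ is projective. Its indecomposability is equivalent to the endomorphism ring being local. By Lemma \ref{2.6}(1),
\[
\End_{A\otimes B}(P\otimes Q)\simeq \End_A(P)\otimes \End_B(Q).
\]
Since $P$ and $Q$ are indecomposable, $\End_A(P)$ and $\End_B(Q)$ are local finite-dimensional $K$-algebras. Here I would use that $K$ is algebraically closed: a finite-dimensional local $K$-algebra has residue field $K$, so its radical quotient is $K$, and the tensor product of two such algebras is again local (its radical is $\rad\End_A(P)\otimes \End_B(Q)+\End_A(P)\otimes \rad\End_B(Q)$, with quotient $K\otimes_K K=K$). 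Hence $\End_{A\otimes B}(P\otimes Q)$ is local and $P\otimes Q$ is indecomposable.

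For part (2), the idempotent decomposition $1_{A\otimes B}=\sum_{i,j}e_i\otimes f_j$ into primitive orthogonal idempotents (primitivity coming from part (1) applied to $P_i=e_iA$, $Q_j=f_jB$) shows that every indecomposable projective $A\otimes B$-module is a direct summand of $A\otimes B=\bigoplus_{i,j}(e_i\otimes f_j)(A\otimes B)$, hence is isomorphic to some $(e_i\otimes f_j)(A\otimes B)\simeq P_i\otimes Q_j$. For part (3), I would pass to simples via the radical: the simple tops of the indecomposable projectives are precisely the simple modules, and $\operatorname{top}(P\otimes Q)\simeq \operatorname{top}(P)\otimes\operatorname{top}(Q)$ because the radical of $A\otimes B$ is $\rad A\otimes B+A\otimes \rad B$, so $(A\otimes B)/\rad(A\otimes B)\simeq (A/\rad A)\otimes(B/\rad B)$, and a simple module over the latter is $S\otimes S'$ with $S,S'$ simple. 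The main obstacle is part (1): one must verify carefully that the tensor product of two local finite-dimensional algebras over an \emph{algebraically closed} field is local, since this fails over a general field (e.g. $\mathbb{C}\otimes_{\mathbb{R}}\mathbb{C}$ is not local), and it is precisely this point that forces the hypothesis on $K$.
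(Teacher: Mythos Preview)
The paper does not actually prove Proposition~\ref{2.8}; it is stated as ``well-known'' with no argument given. Your proof is correct and is the standard one. In fact, the paper later proves the more general Proposition~\ref{3.c} (indecomposability of $M\otimes N$ for arbitrary indecomposable $M,N$) by exactly the argument you give for part~(1): identify $\End_{A\otimes B}(M\otimes N)\simeq\End_A(M)\otimes\End_B(N)$ via Lemma~\ref{2.6}(1), observe that its radical is $\rad\End_A(M)\otimes\End_B(N)+\End_A(M)\otimes\rad\End_B(N)$, and use that the quotient is a tensor product of division algebras over $K$, hence simple by Proposition~\ref{2.8}(3). So your approach to (1) is precisely the paper's method, just specialized to projectives; your remark that algebraic closedness is essential (with the $\mathbb{C}\otimes_{\mathbb{R}}\mathbb{C}$ counterexample) is a useful addition the paper omits.
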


\section{Main results}

In this section, we study the intersections among tensor products of algebras, $\tau$-rigid modules and Gorenstein projective modules. We give a method in constructing non-trivial Gorenstein projective support $\tau$-tilting modules. As a consequence, we can give a partial answer to the question posed by Xie and Zhang \cite[Question 5.7]{XZ}

The following properties on the indecomposable direct summands of tensor products of modules are essential in this paper.

\begin{proposition}\label{3.c} Let $A$ and $B$ be two algebras. If $M\in\mod A$ and $N\in\mod B$ are indecomposable modules, then $M\otimes N$ is an indecomposable module in $\mod(A\otimes B)$.
\end{proposition}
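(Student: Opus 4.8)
The plan is to characterize indecomposability through endomorphism rings and exploit the multiplicativity of the isomorphism in Lemma \ref{2.6}(1). Since $M\otimes N$ is finite-dimensional over $K$, it is of finite length over $A\otimes B$, so by Fitting's lemma it is indecomposable if and only if $\End_{A\otimes B}(M\otimes N)$ is a local ring. The first step is therefore to identify this endomorphism ring. Taking $M_1=M_2=M$ and $N_1=N_2=N$ in Lemma \ref{2.6}(1) gives a $K$-linear isomorphism
\[
\End_{A\otimes B}(M\otimes N)\;\simeq\;\End_A(M)\otimes_K\End_B(N).
\]
I would note that the natural map sending $f\otimes g$ to the endomorphism $m\otimes n\mapsto f(m)\otimes g(n)$ is in fact an algebra homomorphism, so that Lemma \ref{2.6}(1) upgrades to a $K$-algebra isomorphism; the problem is thus reduced to showing that a tensor product $R\otimes_K S$ of two local endomorphism algebras is local.

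Next I would use the standing hypothesis that $K$ is algebraically closed. Because $M$ and $N$ are indecomposable of finite length, $R:=\End_A(M)$ and $S:=\End_B(N)$ are finite-dimensional local $K$-algebras, and since $K$ is algebraically closed each has residue field equal to $K$; that is, writing $\mathfrak{m}=\rad R$ and $\mathfrak{n}=\rad S$, we have $R/\mathfrak{m}\simeq K$ and $S/\mathfrak{n}\simeq K$, with $\mathfrak{m}$ and $\mathfrak{n}$ nilpotent, say $\mathfrak{m}^a=0$ and $\mathfrak{n}^b=0$.

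Finally I would prove that $R\otimes_K S$ is local. Consider the two-sided ideal $I=\mathfrak{m}\otimes S+R\otimes\mathfrak{n}$. The quotient satisfies
\[
(R\otimes_K S)/I\;\simeq\;(R/\mathfrak{m})\otimes_K(S/\mathfrak{n})\;\simeq\;K\otimes_K K\;\simeq\;K,
\]
so $I$ is a maximal ideal of codimension one. Writing $J_1=\mathfrak{m}\otimes S$ and $J_2=R\otimes\mathfrak{n}$, these are ideals with $J_1^a=\mathfrak{m}^a\otimes S=0$ and $J_2^b=R\otimes\mathfrak{n}^b=0$. Expanding $I^{a+b-1}\subseteq\sum_{i+j=a+b-1}J_1^{\,i}J_2^{\,j}$ and observing that every summand has $i\geq a$ or $j\geq b$, each term vanishes, so $I^{a+b-1}=0$. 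A nilpotent ideal with field quotient is the Jacobson radical, hence $R\otimes_K S$ is local. Combining the three steps, $\End_{A\otimes B}(M\otimes N)$ is local and $M\otimes N$ is indecomposable.

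The main obstacle is the final step, and it is precisely where algebraic closedness is indispensable: the computation $(R/\mathfrak{m})\otimes_K(S/\mathfrak{n})\simeq K$ relies on both residue fields being $K$. Over a non-closed field a tensor product of division algebras (or field extensions) can split, as in $\mathbb{C}\otimes_{\mathbb{R}}\mathbb{C}\simeq\mathbb{C}\times\mathbb{C}$, which is not local; so the hypothesis is genuinely used and not merely for convenience. I would also take care to verify that the abstract isomorphism of Lemma \ref{2.6}(1) respects composition, since the whole argument hinges on transporting the ring structure, not just the vector space dimension.
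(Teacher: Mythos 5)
Your proposal is correct and follows essentially the same route as the paper: identify $\End_{A\otimes B}(M\otimes N)$ with $\End_A(M)\otimes_K\End_B(N)$ via Lemma \ref{2.6}(1) and show this tensor product of local algebras is local by passing to the quotient modulo $I=\rad(\End_A(M))\otimes\End_B(N)+\End_A(M)\otimes\rad(\End_B(N))$. The only difference is one of detail: the paper simply asserts that $I$ is the radical and invokes Proposition \ref{2.8}(3) for simplicity of the quotient, whereas you justify this by computing the quotient to be $K$ (using algebraic closedness) and verifying nilpotence of $I$ directly, which makes the argument more self-contained.
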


\begin{proof} By Lemma \ref{2.6}(1), there is an algebra isomorphism $$\End_{A\otimes B}(M\otimes N)\simeq \End_A(M)\otimes \End_B(N)$$ Denote by $I=\End_A(M)\otimes\rad(\End_B(N))+\rad(\End_A(M))\otimes \End_B(N)$, the radical of $\End_{A\otimes B}(M\otimes N)$. Then tensoring the short exact sequences $$0\rightarrow \rad(\End_A(M))\rightarrow \End_A(M)\rightarrow \End_A(M)/\rad(\End_A(M))\rightarrow 0$$ with $$0\rightarrow \rad(\End_B(N))\rightarrow \End_B(N)\rightarrow \End_B(N)/\rad(\End_B(N))\rightarrow 0$$ over $K$, one gets that $\End_A(M)\otimes \End_B(N)/I \simeq\End_A(M)/\rad(\End_A(M))\otimes (\End_B(N)/\rad(\End_B(N))$ is simple by Lemma \ref{2.8}(3). Then $\End_A(M)\otimes \End_B(N)$
is a local algebra. This implies that $M\otimes N$ is indecomposable.
\end{proof}

We have the following proposition.
\begin{proposition}\label{2.9} Let $A$ and $B$ be two algebras with $M\in \mod A$ and $N\in\mod B$.
\begin{enumerate}[\rm(1)]
\item $|M\otimes N|=|M||N|$ holds,
\item $|A\otimes B|=|A||B|$ holds.
\end{enumerate}
\end{proposition}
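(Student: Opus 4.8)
The plan is to reduce the whole statement to a single reflection property of the tensor product and then prove that property by restriction of scalars. For (1), first decompose $M\simeq\bigoplus_{i=1}^{s}M_i^{a_i}$ and $N\simeq\bigoplus_{j=1}^{t}N_j^{b_j}$ into indecomposables with the $M_i$ (resp. the $N_j$) pairwise non-isomorphic, so that $s=|M|$ and $t=|N|$. Since $-\otimes_K-$ distributes over finite direct sums, $M\otimes N\simeq\bigoplus_{i,j}(M_i\otimes N_j)^{a_ib_j}$, and each $M_i\otimes N_j$ is indecomposable by Proposition \ref{3.c}. Thus $|M\otimes N|$ equals the number of isomorphism classes among the $st$ modules $M_i\otimes N_j$, and everything comes down to showing that these are pairwise non-isomorphic, i.e. that an isomorphism $M_i\otimes N_j\simeq M_k\otimes N_l$ forces $M_i\simeq M_k$ and $N_j\simeq N_l$.

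The key step, and the only point requiring real content, is this reflection property, which I would prove by restricting scalars along the algebra embeddings $A\hookrightarrow A\otimes B$, $a\mapsto a\otimes 1$, and $B\hookrightarrow A\otimes B$, $b\mapsto 1\otimes b$. Viewing $U\otimes V$ as a right $A$-module through the first embedding and choosing a $K$-basis $v_1,\dots,v_m$ of $V$, one has $U\otimes V=\bigoplus_{r}U\otimes v_r\simeq U^{\dim_K V}$ in $\mod A$; symmetrically $U\otimes V\simeq V^{\dim_K U}$ in $\mod B$. Now an $A\otimes B$-isomorphism $M_i\otimes N_j\simeq M_k\otimes N_l$ is in particular an isomorphism of $A$-modules, so $M_i^{\dim_K N_j}\simeq M_k^{\dim_K N_l}$ in $\mod A$; since $M_i$ and $M_k$ are indecomposable, the Krull--Schmidt theorem yields $M_i\simeq M_k$. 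Restricting the same isomorphism to $B$ instead gives $N_j\simeq N_l$ by the identical argument.

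With the reflection property in hand, the $st$ indecomposables $M_i\otimes N_j$ are pairwise non-isomorphic, so $|M\otimes N|=st=|M||N|$, which proves (1). Part (2) is then the special case of (1) obtained by taking $M$ to be the regular module $A_A$ and $N=B_B$: indeed $A\otimes B$ is the regular $(A\otimes B)$-module and $|A|=|A_A|$, $|B|=|B_B|$, so $|A\otimes B|=|A||B|$, which is moreover consistent with Proposition \ref{2.8} identifying the indecomposable projectives of $A\otimes B$ as the modules $P\otimes Q$. The entire difficulty is concentrated in the reflection step; the rest is bookkeeping with Krull--Schmidt, and the restriction-of-scalars observation $U\otimes V\simeq U^{\dim_K V}$ is precisely what lets one sidestep any delicate analysis of homomorphisms between the tensor factors (so that Lemma \ref{2.6} is not even needed here).
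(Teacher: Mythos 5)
Your proof is correct and takes essentially the same route as the paper's: decompose $M$ and $N$ into indecomposables, invoke Proposition \ref{3.c} for indecomposability of each $M_i\otimes N_j$, and use the fact that the tensor product reflects isomorphisms to see the summands are pairwise non-isomorphic. The only difference is that the paper dismisses this last cancellation step as easy to see, whereas you supply the restriction-of-scalars and Krull--Schmidt argument that justifies it.
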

\begin{proof} It is easy to see that $M\otimes N_1\simeq M\otimes N_2$ in $\mod(A\otimes B)$ implies that $N_1\simeq N_2$ in $\mod B$. Then one gets the assertion by Proposition \ref{3.c}.
\end{proof}

Now we show the following proposition on tensor products of Gorenstein projective modules which is shown in \cite[Proposition 2.6]{HuLXZ}. We give a different proof in terms of functors.

\begin{proposition}\label{3.1} Let $A$ and $B$ be two algebras. Let $M\in \mod A$ and $N\in \mod B$ be Gorenstein projective modules. Then
$M\otimes N\in\mod(A\otimes B)$ is Gorenstein projective.
\end{proposition}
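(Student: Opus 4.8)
The plan is to verify the Auslander--Bridger criterion of Proposition \ref{2.2} for $M \otimes N$ over $A \otimes B$, reducing each of its three conditions to the corresponding conditions for $M$ over $A$ and $N$ over $B$ by means of the K\"unneth-type formulas of Lemma \ref{2.6}. Throughout I write $(-)^{\ast}$ for the duality functor over whichever algebra is currently in play.

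First I would compute the dual of $M \otimes N$. Taking $M_1 = M$, $N_1 = N$, $M_2 = A$ and $N_2 = B$ in Lemma \ref{2.6}(1) gives a natural isomorphism
$$(M \otimes N)^{\ast} = \Hom_{A \otimes B}(M \otimes N, A \otimes B) \simeq \Hom_A(M, A) \otimes \Hom_B(N, B) = M^{\ast} \otimes N^{\ast}.$$
Applying the same lemma a second time, now over the opposite algebra $(A \otimes B)^{\op} \simeq A^{\op} \otimes B^{\op}$, yields $(M \otimes N)^{\ast\ast} \simeq M^{\ast\ast} \otimes N^{\ast\ast}$. Since $M$ and $N$ are Gorenstein projective, Proposition \ref{2.2} gives $M \simeq M^{\ast\ast}$ and $N \simeq N^{\ast\ast}$, so $(M \otimes N)^{\ast\ast} \simeq M \otimes N$, establishing the first condition.

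Next I would dispatch the two Ext-vanishing conditions together. By Lemma \ref{2.6}(2), for every $i \geq 1$,
$$\Ext^{i}_{A \otimes B}(M \otimes N, A \otimes B) \simeq \bigoplus_{p+q=i} \Ext^{p}_A(M, A) \otimes \Ext^{q}_B(N, B).$$
In each summand $p + q = i \geq 1$, so $p \geq 1$ or $q \geq 1$; as $M$ and $N$ are Gorenstein projective, $\Ext^{p}_A(M, A) = 0$ when $p \geq 1$ and $\Ext^{q}_B(N, B) = 0$ when $q \geq 1$, so every summand vanishes. Running the same computation on $(M \otimes N)^{\ast} \simeq M^{\ast} \otimes N^{\ast}$ (over the opposite algebra) and invoking the vanishing of $\Ext^{p}_A(M^{\ast}, A)$ and $\Ext^{q}_B(N^{\ast}, B)$ in positive degrees shows $\Ext^{i}_{A \otimes B}((M \otimes N)^{\ast}, A \otimes B) = 0$ for all $i \geq 1$. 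Combined with the previous paragraph, Proposition \ref{2.2} then yields that $M \otimes N$ is Gorenstein projective.

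The point requiring the most care is the first condition: Proposition \ref{2.2} asks that the canonical evaluation morphism $M \otimes N \to (M \otimes N)^{\ast\ast}$ be an isomorphism, not merely that an abstract isomorphism exist. I would check, using the naturality of the isomorphism in Lemma \ref{2.6}(1), that this evaluation map is identified with the tensor product of the evaluation maps of $M$ and $N$ under the displayed identifications, so that it is an isomorphism exactly because those two are. The remaining bookkeeping, namely that $M^{\ast}$ and $N^{\ast}$ are left modules and must therefore be handled over $A^{\op}$, $B^{\op}$ and $(A \otimes B)^{\op} \simeq A^{\op} \otimes B^{\op}$, is routine and causes no difficulty.
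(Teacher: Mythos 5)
Your proof is correct and follows essentially the same route as the paper's: both reduce the three conditions of Proposition \ref{2.2} to the corresponding conditions for $M$ and $N$ via the K\"unneth-type isomorphisms of Lemma \ref{2.6}, with the identification $(M\otimes N)^{\ast}\simeq M^{\ast}\otimes N^{\ast}$ as the key step (the paper merely packages the Ext-vanishing under the name ``semi-Gorenstein projective''). Your closing remark about checking that the evaluation map $M\otimes N\to (M\otimes N)^{\ast\ast}$ is the tensor product of the evaluation maps is a point the paper passes over silently, so it is a welcome addition rather than a divergence.
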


\begin{proof} Following Ringel and Zhang \cite{RZ1}, we call a module $M\in\mod A$ semi-Gorenstein projective if $\Ext_{A}^i(M,A)=0$ for all $i\geq 1$. By Proposition \ref{2.2}, we divide the proof into three steps.

(1) We show that $M\otimes N\in\mod(A\otimes B)$ is semi-Gorenstein projective if $M\in \mod A$ and $N\in \mod B$ are semi-Gorenstein projective.

Since $M$ and $N$ are both semi-Gorenstein projective, then $\Ext^i_A(M,A)=\Ext^j_B(N,B)=0$ holds for all $i\geq 1$ and $j\geq 1$. By Lemma \ref{2.6}(2) we get that $\Ext_{A\otimes B}^{m}(M\otimes N,A\otimes B)\simeq \bigoplus_{i+j=m}\Ext^i_A(M,A)\otimes\Ext^j_B(N,B)=0$ holds for $m\geq 1$.

(2) We show $(M\otimes N)^*$ is semi-Gorenstein projective if both $M^*$ and $N^*$ are semi-Gorenstein projective.

By Lemma \ref{2.6}(1), we get that $(M\otimes N)^*\simeq \Hom_{A\otimes B}(M\otimes N,A\otimes B)\simeq \Hom_A(M,A)\otimes\Hom_B(N,B)\simeq M^*\otimes N^*$. Then the assertion follows from $(1)$.

(3) We show that $M\otimes N$ is reflexive, that is, $M\otimes N\simeq(M\otimes N)^{**}$.

By (2)$(M\otimes N)^*\simeq M^*\otimes N^*$. Then one gets the assertion by using (2) once more.
\end{proof}

It has been shown in \cite{XZ} the quotient algebras of $CM$-finite algebras need not be $CM$-finite. However, we have the following result .

\begin{corollary}\label{3.d} Let $A$ be an algebra and let $T_n(A)$ be the lower triangular matrix algebra for $n\geq 2$. If $T_n(A)$ is $CM$-finite, then $A$ is $CM$-finite.
\end{corollary}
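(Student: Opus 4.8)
The plan is to realize $T_n(A)$ as a tensor product and then transport indecomposable Gorenstein projective $A$-modules into $\mod T_n(A)$ by tensoring with a fixed module, arguing by contraposition. First I would recall the standard identification $T_n(A)\simeq A\otimes_K T_n(K)$, where $T_n(K)$ denotes the lower triangular $n\times n$ matrix algebra over $K$: a lower triangular matrix with entries in $A$ is precisely a $K$-linear combination of elements $a\otimes e_{ij}$ with $a\in A$ and $e_{ij}$ ($i\geq j$) a lower triangular matrix unit, and this identification is an algebra isomorphism. This reduces the corollary to the tensor-product setting developed in this section.

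Next I would fix a single Gorenstein projective module in $\mod T_n(K)$ to tensor against. Since every projective module is Gorenstein projective, I may simply take $P$ to be an indecomposable projective module in $\mod T_n(K)$. Then for each indecomposable Gorenstein projective module $M\in\mod A$, Proposition \ref{3.1} shows that $M\otimes P$ is Gorenstein projective in $\mod(A\otimes_K T_n(K))\simeq\mod T_n(A)$, while Proposition \ref{3.c} shows that $M\otimes P$ is indecomposable. Hence the assignment $M\mapsto M\otimes P$ carries indecomposable Gorenstein projective $A$-modules to indecomposable Gorenstein projective $T_n(A)$-modules.

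The crucial point is that this assignment is injective on isomorphism classes, and this is exactly the observation recorded in the proof of Proposition \ref{2.9}: by the symmetric form of the statement ``$M\otimes N_1\simeq M\otimes N_2$ implies $N_1\simeq N_2$'', an isomorphism $M_1\otimes P\simeq M_2\otimes P$ forces $M_1\simeq M_2$. Consequently, were $A$ to admit infinitely many pairwise non-isomorphic indecomposable Gorenstein projective modules, the modules $M\otimes P$ would provide infinitely many pairwise non-isomorphic indecomposable Gorenstein projective $T_n(A)$-modules, contradicting the $CM$-finiteness of $T_n(A)$. Taking the contrapositive yields the corollary.

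I expect the only genuinely delicate step to be the bookkeeping in the first paragraph, namely checking that the isomorphism $T_n(A)\simeq A\otimes_K T_n(K)$ is compatible with the corresponding equivalence of module categories, so that Propositions \ref{3.1}, \ref{3.c}, and \ref{2.9} apply verbatim. Once that identification is in place, the remainder is a formal consequence of the injectivity of tensoring by a fixed indecomposable module, which is already available.
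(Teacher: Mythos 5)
Your proof is correct and follows essentially the same route as the paper: the paper likewise transports indecomposable Gorenstein projective $A$-modules into $\mod T_n(A)$ by tensoring over $K$ with a $T_n(K)$-module and concludes via injectivity on isomorphism classes (the observation from the proof of Proposition \ref{2.9}). The only cosmetic difference is that you tensor with a fixed indecomposable projective $P$, so indecomposability of the image is immediate from Proposition \ref{3.c}, whereas the paper tensors with all of $T_n(K)$ and leaves the passage to indecomposable summands implicit.
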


\begin{proof} Let $M_1, M_2$ be two indecomposable Gorenstein projective modules in $\mod A$ such that $T_n(K)\otimes M_1\simeq T_n(K)\otimes M_2\in \mod T_n(A)$. Then one gets $M_1\simeq M_2$ since $T_n(K)$ is projective over $K$.
\end{proof}

In the following we focus on the tensor products of $\tau$-rigid modules. In general, the tensor products of $\tau$-rigid modules need not be $\tau$-rigid. However, we have the following proposition.

\begin{proposition}\label{3.2} Let $A$ and $B$ be two algebras. Let $M\in\mod B$ be a $\tau$-rigid module and $P\in\mod A$ be a projective module.
Then $P\otimes M\in \mod(A\otimes B)$ is a $\tau$-rigid module.
\end{proposition}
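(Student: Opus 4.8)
The plan is to verify the $\tau$-rigidity of $P\otimes M$ directly through the minimal-projective-presentation criterion of Proposition \ref{2.5}, transporting that criterion for $M$ across the exact functor $P\otimes_K(-)$ by means of the natural isomorphism of Lemma \ref{2.6}(1).

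First I would fix a minimal projective presentation $Q_1\xrightarrow{g}Q_0\to M\to 0$ of $M$ in $\mod B$. Since $K$ is a field, the functor $P\otimes_K(-)$ is exact, so tensoring yields an exact sequence $P\otimes Q_1\xrightarrow{\mathrm{id}_P\otimes g}P\otimes Q_0\to P\otimes M\to 0$ in $\mod(A\otimes B)$, and by Proposition \ref{2.8} the modules $P\otimes Q_0$ and $P\otimes Q_1$ are projective (as $P$ is projective over $A$ and $Q_0,Q_1$ are projective over $B$). The first technical point is to check that this presentation is again minimal. Using $\rad(A\otimes B)=\rad(A)\otimes B+A\otimes\rad(B)$, valid since $K$ is algebraically closed (cf.\ the proof of Proposition \ref{3.c}), one gets $P\otimes\rad(Q_i)\subseteq\rad(P\otimes Q_i)$; combined with the minimality of the presentation of $M$, which gives $\Ker(Q_0\to M)\subseteq\rad(Q_0)$ and $\Ker g\subseteq\rad(Q_1)$, the exactness of $P\otimes_K(-)$ yields $\Ker(P\otimes Q_0\to P\otimes M)=P\otimes\Ker(Q_0\to M)\subseteq\rad(P\otimes Q_0)$ and similarly for the left-hand map, so both maps are projective covers and the tensored presentation is minimal.

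Next I would apply $\Hom_{A\otimes B}(-,P\otimes M)$ to the map $\mathrm{id}_P\otimes g$ and invoke the naturality of the isomorphism in Lemma \ref{2.6}(1). This identifies $\Hom_{A\otimes B}(\mathrm{id}_P\otimes g,\,P\otimes M)$ with $\mathrm{id}_{\End_A(P)}\otimes\Hom_B(g,M)$, where $\Hom_B(g,M)\colon\Hom_B(Q_0,M)\to\Hom_B(Q_1,M)$ is the map induced by $g$. Because $M$ is $\tau$-rigid, Proposition \ref{2.5} applied to its minimal presentation tells us $\Hom_B(g,M)$ is epic; tensoring an epimorphism over the field $K$ with an identity preserves surjectivity, so $\mathrm{id}_{\End_A(P)}\otimes\Hom_B(g,M)$ is epic, and hence so is $\Hom_{A\otimes B}(\mathrm{id}_P\otimes g,\,P\otimes M)$. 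Proposition \ref{2.5} then gives that $P\otimes M$ is $\tau$-rigid.

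The routine parts are the exactness of $P\otimes_K(-)$ and the projectivity of the tensor factors. I expect the genuine obstacle to be twofold: establishing the minimality of the tensored presentation, so that Proposition \ref{2.5} becomes applicable, which rests on the radical identity for $A\otimes B$; and checking carefully that the isomorphism of Lemma \ref{2.6}(1) is natural enough to identify the induced map $\Hom_{A\otimes B}(\mathrm{id}_P\otimes g,\,P\otimes M)$ with $\mathrm{id}\otimes\Hom_B(g,M)$, since the entire reduction hinges on that identification.
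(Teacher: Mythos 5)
Your proposal is correct and follows essentially the same route as the paper: tensor a minimal projective presentation of $M$ with $P$, identify the induced map $\Hom_{A\otimes B}(\mathrm{id}_P\otimes g, P\otimes M)$ with $\mathrm{id}_{\End_A(P)}\otimes\Hom_B(g,M)$ via Lemma \ref{2.6}(1), and conclude surjectivity from Proposition \ref{2.5}. You are in fact somewhat more careful than the paper, which asserts minimality of the tensored presentation without the radical computation you supply.
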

\begin{proof} Let $P_1\stackrel{f}{\rightarrow} P_0\rightarrow M\rightarrow0$ be a minimal projective presentation of $M$. Then one gets the following minimal projective presentation of $P\otimes M$: $P\otimes P_1\stackrel{Id_P\otimes f}{\rightarrow} P\otimes P_0\rightarrow P\otimes M\rightarrow0$. By Proposition \ref{2.5}, it suffices to show that $\Hom_{A\otimes B}(Id_P\otimes f, P\otimes M): \Hom_{A\otimes B}(P\otimes P_0,P\otimes M)\rightarrow \Hom_{A\otimes B}(P\otimes P_1,P\otimes M) $ is a surjective map. By Lemma \ref{2.6}(1), the map above can be seen as: $\Hom_A(P,P)\otimes \Hom_B(P_0,M)\rightarrow \Hom_A(P,P)\otimes \Hom_B(P_1,M)$ via $g\otimes h\rightarrow gId_P\otimes hf$. Since $M$ is $\tau$-rigid, we get that $\Hom(f,M):\Hom_B(P_0,M)\rightarrow\Hom_B(P_1,M)$ is a surjective map. For any generator $k\otimes l\in \Hom_A(P,P)\otimes \Hom_B(P_1,M)$, we get a morphism $h$ such that $hf=l$. Therefore, $(k\otimes h)(Id_P\otimes f)=k\otimes l$ which implies the map $\Hom_{A\otimes B}(Id_P\otimes f, P\otimes M)$ is surjective. Then the assertion holds.
\end{proof}

The following proposition on tensor products of algebras is essential.

\begin{proposition}\label{3.3} Let $A$ and $B$ be two algebras. Let $(a)$ be a principal ideal of $A$ and $(b)$ be a principal ideal of $B$.
Then the principal ideal $(a\otimes b)=(a)\otimes (b)$.
\end{proposition}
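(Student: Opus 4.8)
The plan is to unwind the definitions and verify the two inclusions directly. Here the principal two-sided ideals are $(a)=AaA$ and $(b)=BbB$, while $(a\otimes b)=(A\otimes B)(a\otimes b)(A\otimes B)$ is the two-sided ideal of $A\otimes B$ generated by $a\otimes b$. Since $K$ is a field, the functor $-\otimes_K-$ is exact, so $(a)\otimes_K(b)$ is naturally identified with the $K$-subspace of $A\otimes B$ spanned by the elements $u\otimes v$ with $u\in(a)$ and $v\in(b)$; I would work with this description throughout. The first thing to record is that this subspace is in fact a two-sided ideal of $A\otimes B$: for a simple tensor $x\otimes y\in A\otimes B$ one has $(x\otimes y)(u\otimes v)=(xu)\otimes(yv)$, and since $xu\in(a)$ and $yv\in(b)$ this again lies in $(a)\otimes(b)$; closure under left and right multiplication by arbitrary elements then follows by $K$-linearity.

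For the inclusion $(a\otimes b)\subseteq(a)\otimes(b)$, I would note that $a\in(a)$ and $b\in(b)$ give $a\otimes b\in(a)\otimes(b)$; as $(a)\otimes(b)$ is a two-sided ideal and $(a\otimes b)$ is by definition the smallest two-sided ideal containing $a\otimes b$, the inclusion is immediate.

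For the reverse inclusion $(a)\otimes(b)\subseteq(a\otimes b)$, it suffices to treat a spanning element $u\otimes v$ with $u\in(a)$ and $v\in(b)$. Writing $u=\sum_i x_i a y_i$ and $v=\sum_j s_j b t_j$, bilinearity gives $u\otimes v=\sum_{i,j}(x_i a y_i)\otimes(s_j b t_j)$, and the key identity is $(x_i a y_i)\otimes(s_j b t_j)=(x_i\otimes s_j)(a\otimes b)(y_i\otimes t_j)$, which exhibits each summand as an element of $(a\otimes b)$. Summing yields $u\otimes v\in(a\otimes b)$, and equality of the two ideals follows.

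There is no serious obstacle here; the only point that needs care is the identification of $(a)\otimes_K(b)$ with a subspace of $A\otimes_K B$, which relies on exactness of $\otimes_K$ over the field $K$, together with the bookkeeping identity $(xay)\otimes(sbt)=(x\otimes s)(a\otimes b)(y\otimes t)$ that translates generators of $(a)\otimes(b)$ into products witnessing membership in $(a\otimes b)$.
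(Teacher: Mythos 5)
Your proof is correct and follows essentially the same route as the paper's: both inclusions rest on the identity $(xay)\otimes(sbt)=(x\otimes s)(a\otimes b)(y\otimes t)$ together with the observation that $(a)\otimes(b)$ is itself a two-sided ideal of $A\otimes B$. Your handling of the inclusion $(a\otimes b)\subseteq(a)\otimes(b)$ via minimality of the generated ideal is a slightly cleaner packaging of the paper's direct expansion, and your explicit remarks on identifying $(a)\otimes_K(b)$ with a subspace of $A\otimes_K B$ and on verifying the ideal property are points the paper leaves implicit, but the substance is the same.
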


\begin{proof} We first show $(a\otimes b)\subseteq(a)\otimes (b)$. For any element $m\in(a\otimes b)\subseteq A\otimes B$, one gets that $m=\Sigma_{i=1}^n (a_i\otimes b_i)(a\otimes b)(c_i\otimes d_i)=\Sigma_{i=1}^n a_iac_i\otimes b_ibd_i$. Since $a_iac_i\otimes b_ibd_i\in (a)\otimes (b)$ and $(a)\otimes (b)$ is an ideal of $A\otimes B$, we get that $m\in (a)\otimes (b)$.

Conversely, for any $n\in (a)\otimes (b)$, one gets that $n=\Sigma_{i=1}^{t}a_i\otimes b_i$, where $a_i=\Sigma_{k=1}^{t_i}a_{ik}ac_{ik}$ $b_i=\Sigma_{j=1}^{s_i}b_{ij}bd_{ij}$.
Thus $n=\Sigma_{i=1}^{t}\Sigma_{j=1}^{s_i}\Sigma_{k=1}^{t_i}a_{ik}ac_{ik}\otimes b_{ij}bd_{ij}$. Since $a_{ik}ac_{ik}\otimes b_{ij}bd_{ij}=(a_{ik}\otimes b_{ij})(a\otimes b)(c_{ik}\otimes d_{ij})$ is in $(a\otimes b)$ and $(a\otimes b)$ is an ideal, then the assertion holds.
\end{proof}

Now we are in a position to show our main result on support $\tau$-tilting modules.

\begin{theorem}\label{3.4} Let $A$ and $B$ be two algebras. Let $M\in\mod B$ be a support $\tau$-tilting module.
Then $A\otimes M\in \mod (A\otimes B)$ is a support $\tau$-tilting module.
\end{theorem}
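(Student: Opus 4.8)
The plan is to transfer the support datum of $M$ across the tensor product and then assemble the preceding propositions. Since $M$ is support $\tau$-tilting over $B$, by definition there is an idempotent $e$ of $B$ such that $M$ is a $\tau$-tilting $B/(e)$-module; in particular $M$ is annihilated by the two-sided ideal $(e)$, it is $\tau$-rigid over $B/(e)$, and $|M|=|B/(e)|$. I would take $f=1_A\otimes e$, which is an idempotent of $A\otimes B$ since $(1_A\otimes e)^2=1_A^2\otimes e^2=1_A\otimes e$, and then show that $A\otimes M$ is a $\tau$-tilting $(A\otimes B)/(f)$-module, which by definition yields that $A\otimes M$ is support $\tau$-tilting.

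The first key step is to identify the quotient algebra. Applying Proposition~\ref{3.3} to the principal ideals $(1_A)=A$ and $(e)$, one has $(1_A\otimes e)=(1_A)\otimes(e)=A\otimes(e)$, and hence $(A\otimes B)/(f)\cong A\otimes(B/(e))$ as algebras. Moreover, since $M\cdot(e)=0$, the module $A\otimes M$ is annihilated by $A\otimes(e)=(f)$, so $A\otimes M$ genuinely lies in $\mod\big((A\otimes B)/(f)\big)=\mod\big(A\otimes(B/(e))\big)$. This reduction lets me replace $B$ by $B/(e)$ and work over $A\otimes(B/(e))$.

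It then remains to verify the two defining conditions of a $\tau$-tilting module over $A\otimes(B/(e))$. For $\tau$-rigidity, I would apply Proposition~\ref{3.2} with the algebra $B/(e)$ in the role of $B$, the $\tau$-rigid module $M$, and the regular projective module $P=A$ over $A$; this gives that $A\otimes M$ is $\tau$-rigid over $A\otimes(B/(e))$. For the numerical condition, Proposition~\ref{2.9} gives $|A\otimes M|=|A|\,|M|$ and $|A\otimes(B/(e))|=|A|\,|B/(e)|$, so $|M|=|B/(e)|$ forces $|A\otimes M|=|A\otimes(B/(e))|$. Together these show that $A\otimes M$ is a $\tau$-tilting $(A\otimes B)/(f)$-module, completing the argument.

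I expect the only genuine obstacle to be the algebra identification $(A\otimes B)/(f)\cong A\otimes(B/(e))$ together with the verification that $A\otimes M$ is annihilated by $(f)$; once Proposition~\ref{3.3} supplies $(1_A\otimes e)=A\otimes(e)$, the remaining $\tau$-rigidity and counting claims are a direct assembly of Propositions~\ref{3.2} and~\ref{2.9}.
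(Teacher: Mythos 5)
Your proposal is correct and follows essentially the same route as the paper: both identify $(A\otimes B)/(1_A\otimes e)\cong A\otimes(B/(e))$ via Proposition~\ref{3.3}, then obtain $\tau$-rigidity from Proposition~\ref{3.2} (with $P=A$) and the count $|A\otimes M|=|A||M|=|(A\otimes B)/(1_A\otimes e)|$ from Proposition~\ref{2.9}. Your write-up is in fact a bit more careful than the paper's, since you explicitly verify that $1_A\otimes e$ is idempotent and that $A\otimes M$ is annihilated by the corresponding ideal.
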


\begin{proof} We divide the proof into two parts.

(1) We show that $A\otimes B/(e)\simeq (A\otimes B)/(1\otimes e)$, where $e$ is an idempotent of $B$.

Note that there is an exact sequence $0\rightarrow (e)\rightarrow B\rightarrow B/(e)\rightarrow 0$.
Applying the functor $A\otimes-$ to the exact sequence above, one gets the following exact sequence
$0\rightarrow A\otimes(e)\rightarrow A\otimes B\rightarrow A\otimes B/(e)\rightarrow 0$. By Proposition \ref{3.3}, one gets the assertion.

(2) We show that $A\otimes M$ is a $\tau$-tilting module over $(A\otimes B)/(1\otimes e)$.

Since $M$ is a support $\tau$-tilting module, then $M$ is a $\tau$-tilting module over $B/(e)$.
Then $|B/(e)|=|M|$. By Proposition \ref{3.2}, $A\otimes M$ is a $\tau$-rigid module. By Proposition \ref{2.9},
$|A\otimes M|=|A||M|=|A||B/(e)|=|(A\otimes B)/(1\otimes e)|$ by (1). The assertion holds.
\end{proof}

Now we have the following corollary on $\tau$-tilting modules.

\begin{corollary}\label{3.5} Let $A$ and $B$ be two algebras. Let $M\in\mod B$ be a $\tau$-tilting module.
Then $A\otimes M\in \mod(A\otimes B)$ is a $\tau$-tilting module.
\end{corollary}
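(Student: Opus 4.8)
The plan is to regard the $\tau$-tilting module $M$ as the special case of a support $\tau$-tilting module corresponding to the trivial idempotent $e=0$, and then invoke Theorem \ref{3.4}. Taking $e=0$ there yields that $A\otimes M$ is a support $\tau$-tilting module over $(A\otimes B)/(1\otimes 0)=A\otimes B$. It then remains only to upgrade this conclusion to the assertion that $A\otimes M$ is a genuine $\tau$-tilting module, that is, that no nontrivial idempotent is required.

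Concretely, I would proceed in two short steps. First, establish $\tau$-rigidity directly: since $A$ is projective as a right $A$-module and $M\in\mod B$ is $\tau$-rigid, Proposition \ref{3.2} gives that $A\otimes M$ is $\tau$-rigid over $A\otimes B$. Second, check the numerical condition $|A\otimes M|=|A\otimes B|$. By Proposition \ref{2.9} we have $|A\otimes M|=|A||M|$ and $|A\otimes B|=|A||B|$, and since $M$ is $\tau$-tilting over $B$ we have $|M|=|B|$. Hence $|A\otimes M|=|A||M|=|A||B|=|A\otimes B|$, and combining this with $\tau$-rigidity shows $A\otimes M$ is $\tau$-tilting over $A\otimes B$ by Definition \ref{2.4}(1).

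I do not anticipate any serious obstacle here, as the statement follows formally from Theorem \ref{3.4} together with the multiplicativity of $|-|$ under the tensor product. The only point requiring attention is that the numerical identity is precisely what promotes \emph{support} $\tau$-tilting to $\tau$-tilting: the equality $|M|=|B|$ is exactly the extra input distinguishing a $\tau$-tilting module from a merely support $\tau$-tilting one, and it is inherited by $A\otimes M$ through Proposition \ref{2.9}.
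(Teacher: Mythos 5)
Your proposal is correct and follows essentially the same route as the paper: the paper derives the corollary immediately from Theorem \ref{3.4}, whose proof in the $\tau$-tilting case (idempotent $e=0$) reduces precisely to the two steps you spell out, namely $\tau$-rigidity via Proposition \ref{3.2} and the count $|A\otimes M|=|A||M|=|A||B|=|A\otimes B|$ via Proposition \ref{2.9}. Your write-up simply makes explicit what the paper leaves implicit.
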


\begin{proof} This is an immediate result of Theorem \ref{3.4}.
\end{proof}

Recall from \cite{DIJ} that an algebra $A$ is called $\tau$-tilting finite if it admits finite number of isomorphism classes of indecomposable $\tau$-rigid modules. We have the following corollary on $\tau$-tilting finite algebras.

\begin{corollary}\label{3.a} Let $A$ be an algebra and let $T_n(A)$ be the lower triangular matrix algebra. If $T_n(A)$ is $\tau$-tilting finite, then $A$ is $\tau$-tilting finite.
\end{corollary}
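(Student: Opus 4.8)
The plan is to show that $\tau$-tilting finiteness of $A$ follows by contraposition: assuming $A$ is not $\tau$-tilting finite, I would produce infinitely many pairwise non-isomorphic indecomposable $\tau$-rigid modules over $T_n(A)$, contradicting the hypothesis. The key observation is that $T_n(A) \simeq T_n(K) \otimes A$, where $T_n(K)$ is the lower triangular matrix algebra over $K$. Taking $B = A$ and the first algebra to be $T_n(K)$ in the preceding results, I have at hand a functor $A\text{-module} \mapsto T_n(K) \otimes (-)$ carrying modules over $A$ to modules over $T_n(A)$.

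First I would recall that $T_n(K)$ is a hereditary algebra, so every indecomposable projective $T_n(K)$-module is in particular projective, and I can fix a nonzero indecomposable projective $P \in \mod T_n(K)$. Then, given any indecomposable $\tau$-rigid module $M \in \mod A$, Proposition \ref{3.2} (applied with the roles of the two algebras as $T_n(K)$ and $A$, so that $P$ is the projective factor) guarantees that $P \otimes M$ is a $\tau$-rigid module in $\mod(T_n(K) \otimes A) = \mod T_n(A)$. Moreover, Proposition \ref{3.c} ensures $P \otimes M$ is indecomposable, since both $P$ and $M$ are indecomposable.

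The remaining point is that the assignment $M \mapsto P \otimes M$ is injective on isomorphism classes. This follows from the argument already used in Proposition \ref{2.9} and in Corollary \ref{3.d}: because $P$ is projective (hence free as a $K$-module, $K$ being a field), $P \otimes M_1 \simeq P \otimes M_2$ forces $M_1 \simeq M_2$ over $A$. Thus an infinite family of pairwise non-isomorphic indecomposable $\tau$-rigid $A$-modules $\{M_i\}$ yields an infinite family $\{P \otimes M_i\}$ of pairwise non-isomorphic indecomposable $\tau$-rigid $T_n(A)$-modules, so $T_n(A)$ is not $\tau$-tilting finite either.

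I do not anticipate a serious obstacle here: all the machinery is in place from the earlier propositions, and the proof is essentially a direct application of Proposition \ref{3.2} together with the injectivity-on-iso-classes remark. The only subtlety worth double-checking is the identification $T_n(A) \simeq T_n(K) \otimes_K A$ and the fact that the chosen $P$ is genuinely projective over $T_n(K)$ so that Proposition \ref{3.2} applies verbatim; both are standard facts about triangular matrix algebras over a field. In spirit the statement is the $\tau$-tilting analogue of Corollary \ref{3.d}, and I would phrase the proof to parallel that one.
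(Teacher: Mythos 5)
Your proof is correct, but it follows a different route from the paper's. The paper disposes of this corollary in one line: since $A$ is a quotient algebra of $T_n(A)$, the statement follows from the known fact (due to Demonet--Iyama--Jasso) that $\tau$-tilting finiteness is inherited by quotient algebras. You instead argue by contraposition through the tensor construction: fixing an indecomposable projective $P\in\mod T_n(K)$ and using $T_n(A)\simeq T_n(K)\otimes_K A$, you send each indecomposable $\tau$-rigid $M\in\mod A$ to $P\otimes M$, which is $\tau$-rigid by Proposition \ref{3.2}, indecomposable by Proposition \ref{3.c}, and the assignment is injective on isomorphism classes by restriction to $A$ and Krull--Schmidt. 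All of these steps check out (the aside that $T_n(K)$ is hereditary is not actually needed --- you only need a nonzero indecomposable projective, which any algebra has). What your approach buys is robustness: it is exactly the strategy the paper is forced to use for Theorem \ref{3.7}, where the quotient argument is unavailable because it is open whether $CM$-$\tau$-tilting finiteness passes to quotients; so your proof is the one that generalizes, while the paper's is the shorter one for this particular statement since the relevant closure property of $\tau$-tilting finiteness is already in the literature.
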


\begin{proof} This is clear since $A$ is a quotient algebra of $T_n(A)$.
\end{proof}

Now we are in a position to state the following main result on constructing Gorenstein projective support $\tau$-tilting modules.

\begin{theorem}\label{3.6} Let $A$ and $B$ be two algebras. If $M\in\mod B$ is a Gorenstein projective support $\tau$-tilting module,
then $A\otimes M\in \mod(A\otimes B)$ is a Gorenstein projective support $\tau$-tilting module.
\end{theorem}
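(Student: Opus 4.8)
The plan is to read off the two defining properties of a Gorenstein projective support $\tau$-tilting module separately and to supply each one by invoking a result already established in this section. Since, by Definition \ref{2.a}(3), the module $A\otimes M$ is Gorenstein projective support $\tau$-tilting precisely when it is simultaneously Gorenstein projective and support $\tau$-tilting over $A\otimes B$, it suffices to verify these two conditions independently. Both will follow by feeding the pair $(A,M)$ into Proposition \ref{3.1} and Theorem \ref{3.4}, respectively.

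First I would treat the Gorenstein projective part. The key observation is that the regular module $A\in\mod A$ is projective, and every projective module is Gorenstein projective; indeed, $A\simeq A^{\ast\ast}$ with $\Ext^i_A(A,A)=0$ and $\Ext^i_A(\Tr A,A)=0$ for all $i\geq 1$, so $A$ is Gorenstein projective by Proposition \ref{2.2}. Since $M$ is Gorenstein projective over $B$ by hypothesis, applying Proposition \ref{3.1} to the pair $(A,M)$ yields directly that $A\otimes M\in\mod(A\otimes B)$ is Gorenstein projective.

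Next I would handle the support $\tau$-tilting part, which is immediate: as $M$ is support $\tau$-tilting over $B$ by hypothesis, Theorem \ref{3.4} gives at once that $A\otimes M$ is support $\tau$-tilting over $A\otimes B$. Combining the two conclusions, $A\otimes M$ is both Gorenstein projective and support $\tau$-tilting, hence Gorenstein projective support $\tau$-tilting, which is exactly the assertion.

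The substance of the theorem is therefore carried entirely by the two cited results, so there is no genuine obstacle left to overcome here. The only point requiring care is a compatibility check rather than a difficulty, namely recognizing that the regular module $A$ legitimately plays the role of the left-hand Gorenstein projective factor demanded by Proposition \ref{3.1}; because projectivity of $A$ over itself is automatic and projective modules are Gorenstein projective, this step is routine and the proof reduces to assembling the pieces.
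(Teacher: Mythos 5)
Your proposal is correct and follows exactly the paper's own argument, which simply combines Proposition \ref{3.1} and Theorem \ref{3.4}. The only addition is your explicit check that the regular module $A$ is Gorenstein projective over itself, a point the paper leaves implicit.
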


\begin{proof} This is an immediate result of Proposition \ref{3.1} and Theorem \ref{3.4}.
\end{proof}

As a corollary, we get the following property on the existence of non-trivial Gorenstein projective support $\tau$-tilting modules.

\begin{corollary}\label{3.b} Let $A$ and $B$ be two algebras. If $M\in\mod B$ is a non-trivial Gorenstein projective support $\tau$-tilting module,
then $A\otimes M\in \mod(A\otimes B)$ is a non-trivial Gorenstein projective support $\tau$-tilting module.
\end{corollary}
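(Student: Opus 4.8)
The plan is to reduce everything to Theorem~\ref{3.6} and then isolate the single extra point, namely non-triviality. By Theorem~\ref{3.6} we already know that $A\otimes M$ is a Gorenstein projective support $\tau$-tilting module in $\mod(A\otimes B)$, so the only thing left to verify is that $A\otimes M$ is non-trivial, i.e. that it is not projective (this being the intended meaning of ``non-trivial'', since the projective support $\tau$-tilting modules are always trivially Gorenstein projective). Thus the whole argument comes down to producing a single non-projective indecomposable direct summand of $A\otimes M$.

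First I would use the non-triviality of $M$ to decompose $M\simeq M'\oplus M''$ in $\mod B$, where $M'$ is an indecomposable non-projective (and, being a summand of the Gorenstein projective module $M$, necessarily Gorenstein projective) direct summand. Next, writing the regular module $A$ as a direct sum of its indecomposable projective summands and fixing one such summand $P$, the exactness and additivity of $A\otimes-$ give that $P\otimes M'$ is a direct summand of $A\otimes M$. By Proposition~\ref{3.c}, $P\otimes M'$ is indecomposable, since both $P$ and $M'$ are indecomposable.

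The key step is then to show that this indecomposable summand $P\otimes M'$ is not projective. For this I would argue by contradiction using Lemma~\ref{2.7}(1): if $P\otimes M'$ were projective, then $\pd_{A\otimes B}(P\otimes M')=0$, whence $\pd_A P+\pd_B M'=0$; since $\pd_A P=0$ this forces $\pd_B M'=0$, i.e. $M'$ is projective, contradicting the choice of $M'$. Hence $P\otimes M'$ is a non-projective indecomposable direct summand of $A\otimes M$, and therefore $A\otimes M$ is non-trivial.

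I do not expect a serious obstacle here, since the homological content has already been packaged into Theorem~\ref{3.6}, Proposition~\ref{3.c} and Lemma~\ref{2.7}. The only genuinely delicate point is conceptual rather than computational: fixing the convention that ``non-trivial'' means ``non-projective'' and then checking that non-projectivity is preserved by the tensor construction, which is exactly what the projective-dimension formula of Lemma~\ref{2.7}(1) supplies.
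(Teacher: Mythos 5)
Your proof is correct and follows essentially the same route as the paper: both reduce to Theorem \ref{3.6} and then use the additivity of projective dimension from Lemma \ref{2.7} to rule out projectivity of $A\otimes M$. The paper applies Lemma \ref{2.7} directly to the whole module (first noting via Proposition \ref{2.2} that a non-projective Gorenstein projective module has infinite projective dimension), whereas your detour through an indecomposable summand $P\otimes M'$ and Proposition \ref{3.c} is harmless but not needed.
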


\begin{proof} Since $M$ is non-trivial Gorenstein projective, then we get that $\pd_A M=\infty$ by Proposition \ref{2.2}. By Lemma \ref{2.7}, one gets that $\pd_A A\otimes M=\pd_B M=\infty$. Then the assertion follows from Theorem \ref{3.6}.
\end{proof}

Applying Corollary \ref{3.b} to self-injective algebras, we have the following result which gives a positive answer to Question \ref{1.0}.

\begin{corollary}\label{3.d} Let $A$ be a non-semisimple self-injective algebra which is not local. Then there are non-trivial Gorenstein projective support $\tau$-tilting modules in $\mod T_n(A)$.
\end{corollary}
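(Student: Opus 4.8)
The plan is to deduce the statement from Corollary \ref{3.b} by exhibiting a suitable module over the self-injective factor. First I would record the algebra isomorphism $T_n(A) \simeq T_n(K) \otimes_K A$ (a lower triangular matrix over $A$ is exactly a lower triangular matrix over $K$ with coefficients in $A$). Taking the left tensor factor to be $T_n(K)$ and the right factor to be $A$ in Corollary \ref{3.b}, it then suffices to produce a \emph{non-trivial} (that is, non-projective) Gorenstein projective support $\tau$-tilting module $M \in \mod A$; its image $T_n(K) \otimes M$ will be the desired module in $\mod T_n(A)$, with non-triviality guaranteed by the projective-dimension computation already carried out in the proof of Corollary \ref{3.b}.

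The role of the self-injectivity hypothesis is to trivialize the Gorenstein projectivity requirement. Since $A$ is self-injective, $A$ is an injective module, so $\Ext^i_A(-,A) = 0$ for all $i \geq 1$, and every module over $A$ is reflexive; by Proposition \ref{2.2} this means that \emph{every} module in $\mod A$ is Gorenstein projective. Hence the construction reduces to the purely $\tau$-tilting-theoretic task of finding a support $\tau$-tilting $A$-module that is not projective.

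To build such a module I would invoke the fact (recorded in the acknowledgement, due to Iyama) that a basic connected algebra all of whose $\tau$-rigid modules are projective must be local. Since Gorenstein projectivity and the $\tau$-tilting notions are Morita invariant, and since $T_n$ of Morita equivalent algebras are again Morita equivalent, I may assume $A$ is basic; the hypotheses that $A$ is not local and not semisimple should then allow me to reduce to a connected non-local (hence non-semisimple) block. The cited fact, in contrapositive form, yields an indecomposable non-projective $\tau$-rigid $A$-module $N$. Finally I would complete $N$ to a support $\tau$-tilting module: by the Bongartz-type completion of $\tau$-rigid pairs to support $\tau$-tilting pairs in \cite{AIR}, the pair $(N,0)$ is a direct summand of a support $\tau$-tilting pair $(M,P)$, so that $M$ is a support $\tau$-tilting $A$-module having $N$ as a direct summand. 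Then $M$ is not projective, is Gorenstein projective by the previous paragraph, and so is the required non-trivial Gorenstein projective support $\tau$-tilting module; applying Corollary \ref{3.b} finishes the proof.

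I expect the main obstacle to lie exactly in this last reduction to a connected non-local algebra. Iyama's criterion is stated for \emph{connected} algebras, so to feed the non-local, non-semisimple hypotheses into it one must argue about the block decomposition of the basic algebra and combine a non-projective support $\tau$-tilting module on a connected non-local block with the trivial (projective) support $\tau$-tilting modules on the remaining blocks. Controlling this decomposition — in particular making sure that the non-local and non-semisimple hypotheses genuinely produce a block supporting a non-projective $\tau$-rigid module, rather than, say, a product of local self-injective pieces — is the delicate step; once a non-projective $\tau$-rigid $A$-module is in hand, the tensor-product machinery of Corollary \ref{3.b} and the self-injective reduction via Proposition \ref{2.2} are routine.
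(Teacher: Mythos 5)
Your route is the same as the paper's: use Iyama's observation to produce a non-projective $\tau$-rigid $A$-module, complete it to a non-trivial support $\tau$-tilting module, note that over a self-injective algebra every module is Gorenstein projective, and apply Corollary \ref{3.b} together with $T_n(A)\simeq T_n(K)\otimes A$. The paper's proof is exactly this, only terser: it asserts ``if all $\tau$-rigid modules are projective then the algebra is local'' with no basic/connected caveat and does not spell out the Bongartz-type completion.

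The ``delicate step'' you flag at the end is a genuine gap, and you are right not to wave it away: the hypotheses ``non-semisimple, self-injective, not local'' do \emph{not} force the existence of a non-projective $\tau$-rigid $A$-module. Take $A=K[x]/(x^2)\times K[x]/(x^2)$, or $K\times K[x]/(x^2)$, or $M_2(K[x]/(x^2))$. Each is self-injective, non-semisimple and not local, but the basic algebra in each case is a product of \emph{local} blocks, and over a local self-injective block all $\tau$-rigid modules are projective (for $K[x]/(x^2)$ the only non-projective indecomposable is $K$, with $\tau K=K$ and $\Hom(K,\tau K)\neq 0$). Since $\tau$-rigidity is Morita invariant and decomposes blockwise over a product, such an $A$ has no non-projective $\tau$-rigid module, hence no non-trivial support $\tau$-tilting module, and there is nothing to feed into Corollary \ref{3.b}. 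Iyama's criterion genuinely needs ``basic and connected'', so ``not local'' must be strengthened to ``some block of the basic algebra of $A$ is not local'' (equivalently, some block of $A$ has at least two non-isomorphic simple modules); under that hypothesis the rest of your argument (self-injectivity trivializing Gorenstein projectivity via Proposition \ref{2.2}, completion of the $\tau$-rigid pair, Corollary \ref{3.b}) is correct and complete. Be aware that the paper's own proof has the identical gap at the identical spot; whether the conclusion for $T_n(A)$ nevertheless holds for the excluded $A$ would require a different argument and is settled by neither proof.
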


\begin{proof} It is well-known that for an algebra $B$ if all $\tau$-rigid modules in $\mod B$ are projective, then $B$ is local. By the assumption, one gets that there are non-projective $\tau$-rigid modules in $\mod A$. And hence there are non-trivial support $\tau$-tilting modules $M\in\mod A$.
Then the assertion holds by Corollary \ref{3.b}.
\end{proof}

By using Corollary \ref{3.d}, one is able to construct a large class of non-trivial Gorenstein projective support $\tau$-tilting modules by taking a class of non-trivial Gorenstein projective support $\tau$-tilting modules over a self-injective algebra.

Recall from \cite{XZ} that an algebra $A$ is called $CM$-$\tau$-tilting finite if it admits a finite number of isomorphism classes of indecomposable $\tau$-rigid modules. It is an open question that whether $CM$-$\tau$-tilting finite algebras are closed under quotients. In the following we give a partial positive answer to the question.

\begin{theorem}\label{3.7} Let $A$ be an algebra and let $T_n(A)$ be the lower triangular matrix algebra for $n\geq 2$. If $T_n(A)$ is $CM$-$\tau$-tilting finite, then $A$ is $CM$-$\tau$-tilting finite.
\end{theorem}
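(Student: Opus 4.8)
The plan is to argue by contraposition, constructing an injection from indecomposable Gorenstein projective $\tau$-rigid $A$-modules into indecomposable Gorenstein projective $\tau$-rigid $T_n(A)$-modules. The starting point is the standard identification $T_n(A)\simeq T_n(K)\otimes_K A$ of the lower triangular matrix algebra as a tensor product over the field $K$; this is exactly what allows the tensor-product machinery of Propositions \ref{3.1}, \ref{3.2} and \ref{3.c} to be applied, with the role of the two factors played by $T_n(K)$ and $A$.

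First I would fix a nonzero indecomposable projective module $P\in\mod T_n(K)$ and consider the assignment $M\mapsto P\otimes M$ for $M\in\mod A$. The key point is that this assignment preserves all three relevant properties simultaneously. Indecomposability of $P\otimes M$ is Proposition \ref{3.c}. Gorenstein projectivity of $P\otimes M$ follows from Proposition \ref{3.1}, since a projective module is in particular Gorenstein projective. And $\tau$-rigidity of $P\otimes M$ is precisely Proposition \ref{3.2}, applied with $T_n(K)$ in the role of the algebra over which the first factor is projective and $A$ in the role of the algebra over which $M$ is $\tau$-rigid. It is worth emphasizing that this last step is exactly where the projectivity of $P$ is indispensable: tensor products of $\tau$-rigid modules need not be $\tau$-rigid, so one cannot transport with an arbitrary $\tau$-rigid factor, only with a projective one.

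Next I would check that $M\mapsto P\otimes M$ is injective on isomorphism classes, that is, $P\otimes M_1\simeq P\otimes M_2$ in $\mod T_n(A)$ forces $M_1\simeq M_2$ in $\mod A$. This cancellation of a fixed nonzero tensor factor is the same observation already invoked in the proof of Proposition \ref{2.9}. Combining these facts, if $A$ were not $CM$-$\tau$-tilting finite it would admit infinitely many pairwise non-isomorphic indecomposable Gorenstein projective $\tau$-rigid modules $M_i$; then the modules $P\otimes M_i$ would be infinitely many pairwise non-isomorphic indecomposable Gorenstein projective $\tau$-rigid $T_n(A)$-modules, contradicting the assumption that $T_n(A)$ is $CM$-$\tau$-tilting finite. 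Hence $A$ is $CM$-$\tau$-tilting finite.

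I expect the main obstacle to be conceptual rather than computational: recognizing that $CM$-$\tau$-tilting finiteness is measured by the indecomposable Gorenstein projective $\tau$-rigid modules, and that the correct transport functor is tensoring with a \emph{single} indecomposable projective $T_n(K)$-module rather than with all of $T_n(K)$, so that the output is automatically indecomposable. Once the identification $T_n(A)\simeq T_n(K)\otimes_K A$ and Propositions \ref{3.1}, \ref{3.2}, \ref{3.c} are granted, the simultaneous preservation of the three properties is immediate, and the only remaining input is the cancellation property underlying injectivity.
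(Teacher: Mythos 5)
Your proof is correct and rests on the same engine as the paper's --- the identification $T_n(A)\simeq T_n(K)\otimes_K A$ followed by tensor-transport and cancellation --- but the object you transport is different, and your choice is arguably the more faithful one. The paper applies Theorem \ref{3.6} to send a Gorenstein projective support $\tau$-tilting $A$-module $M$ to $T_n(K)\otimes M$ and then cancels the factor $T_n(K)$; since $CM$-$\tau$-tilting finiteness is defined by counting \emph{indecomposable Gorenstein projective $\tau$-rigid} modules, that argument implicitly uses that finiteness of the one class controls finiteness of the other. You instead tensor with a single indecomposable projective $P\in\mod T_n(K)$ and verify directly that $M\mapsto P\otimes M$ preserves indecomposability (Proposition \ref{3.c}), Gorenstein projectivity (Proposition \ref{3.1}, as $P$ is projective hence Gorenstein projective) and $\tau$-rigidity (Proposition \ref{3.2}), then conclude by the same cancellation on isomorphism classes used in Proposition \ref{2.9} (restricting to $A$, $P\otimes M$ is a finite direct sum of copies of $M$, so Krull--Schmidt gives injectivity). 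This lands exactly on the quantity the definition measures, at the cost of nothing; the paper's version is shorter but leans on Theorem \ref{3.6} and the unstated passage between support $\tau$-tilting modules and indecomposable $\tau$-rigid summands. Your observation that projectivity of $P$ is indispensable for the $\tau$-rigidity step is also exactly the point the paper makes before Proposition \ref{3.2}.
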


\begin{proof} It is well-known that $T_n(A)\simeq T_n(K)\otimes A$. For any Gorenstein projective support $\tau$-tilting module $M\in \mod A$, by Theorem \ref{3.6}, one gets a Gorenstein projective support $\tau$-tilting module $T_n(K)\otimes M\in \mod T_n(A)$. Since $K$ is a field, we get $T_n(K)\otimes M \simeq T_n(K)\otimes N$ implies $M\simeq N \in\mod A$. Therefore, the fact that $T_n(A)$ is $CM$-$\tau$-tilting finite implies that $A$ is $CM$-$\tau$-tilting finite.
\end{proof}

Putting $n=2$, one gets the following result on the representation of Gorenstein projective $\tau$-tilting modules which combines the results in \cite[Corollary 4.7]{PMH} and \cite[Theorem 1.1]{LZ2}.

\begin{proposition} \label{3.8}  Let $A$ be a Gorenstein algebra and let $T_2(A)$ be the lower triangular matrix algebra. If
$M=\left(
\begin{smallmatrix}
X\\
Y\\
\end{smallmatrix}
\right)_{f}
$ is a Gorenstein projective (support) $\tau$-tilting module in $\mod T_2(A)$, then $Y$ is a Gorenstein projective (support) $\tau$-tilting module.
\end{proposition}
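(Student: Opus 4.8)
The plan is to treat the two defining properties of $M$ separately, reading each off from the known representation-theoretic description of the corresponding class of modules over $T_2(A)$, and then to intersect the two conclusions. There is no need to revisit the tensor-product machinery of Theorem~\ref{3.6}: that machinery runs in the \emph{constructing} direction ($A\otimes(-)$), whereas here we must \emph{extract} information about $A$-modules from a $T_2(A)$-module, so the component description of $\mod T_2(A)$ as a morphism category is what is needed.

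First I would exploit that $M=\left(\begin{smallmatrix}X\\Y\end{smallmatrix}\right)_{f}$ is Gorenstein projective. Since $A$ is assumed Gorenstein, the characterization of Gorenstein projective $T_2(A)$-modules recorded in \cite[Theorem 1.1]{LZ2} applies: such an $M$ is a monic representation for which the relevant component together with $\Coker f$ is Gorenstein projective over $A$. Reading off the bottom component, $Y$ either appears directly among the listed Gorenstein projective components or sits in a short exact sequence with Gorenstein projective ends; since the class of Gorenstein projective modules is closed under extensions, either way $Y$ is Gorenstein projective over $A$. This is precisely the step where the hypothesis that $A$ is Gorenstein enters, as it is what validates the clean monic-representation description.

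Second I would exploit that $M$ is (support) $\tau$-tilting. Here I would invoke \cite[Corollary 4.7]{PMH}, which describes the (support) $\tau$-tilting modules over $T_2(A)$ componentwise; applied to $M$ it yields that $Y$ is a (support) $\tau$-tilting $A$-module. In the $\tau$-tilting case one checks that the $|T_2(A)|=2|A|$ indecomposable summands of $M$ distribute so that $Y$ contributes a $\tau$-rigid module with $|Y|=|A|$, while in the support case one tracks the supporting idempotent of $M$ over $T_2(A)$ down to a supporting idempotent for $Y$ over $A$. Combining the two steps, $Y$ is simultaneously Gorenstein projective and (support) $\tau$-tilting, hence a Gorenstein projective (support) $\tau$-tilting $A$-module, as claimed.

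The step I expect to be the main obstacle is not homological but one of bookkeeping: reconciling the left/right-module and upper/lower-triangular conventions of \cite{LZ2} and \cite{PMH} with the triple notation $\left(\begin{smallmatrix}X\\Y\end{smallmatrix}\right)_{f}$, so that both cited characterizations genuinely extract the \emph{same} component $Y$, and so that the support case is handled uniformly with the $\tau$-tilting case. Beyond this matching of conventions, the only substantive inputs are the two cited descriptions together with the closure of Gorenstein projective modules under extensions.
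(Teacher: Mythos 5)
Your proposal is correct and follows essentially the same route as the paper: the paper likewise reads off that $Y$ is Gorenstein projective from the characterization in \cite{LZ2} (where $Y$ in fact appears directly as a Gorenstein projective component, so your extension-closure fallback is not needed) and obtains that $Y$ is (support) $\tau$-tilting from \cite[Corollary 4.7]{PMH}.
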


\begin{proof} It is shown in \cite{LZ2} that $M$ is Gorenstein projective if and only if both $X$ and $Y$ are Gorenstein projective, $f$ is a monomorphism and $\Coker f$ is Gorenstein projective. By \cite[Corollary 4.7]{PMH}, one gets that $Y$ should be a (support) $\tau$-tilting module. The assertion holds.
\end{proof}

For more details on support $\tau$-tilting modules over triangular matrix rings, we refer to \cite{GH,PMH}. For more details on tensor product algebras, we refer to \cite{L}.
We end this paper with the following example to show our main results.

\begin{example}\label{3.h} Let $A$ be the algebra given by the quiver $Q:\xymatrix{1\ar@<.2em>[r]^{a_{1}}&2\ar@<.2em>[l]^{a_{2}}}$ with the relations $a_1a_2=a_2a_1=0$. Let $n\geq 2$ be an integer. Then
\begin{enumerate}[\rm(1)]

\item $A$ is a self-injective algebra and $T_n(A)$ is not a self-injective algebra.

\item $ 1 \oplus \begin{smallmatrix} 1\\&2 \end{smallmatrix}$ and $1$ are two non-trivial Gorenstein projective support $\tau$-tilting modules in $\mod A$.

\item $T_n(K)\otimes( 1 \oplus \begin{smallmatrix} 1\\&2 \end{smallmatrix})$ and $T_n(K)\otimes 1$ are non-trivial Gorenstein projective support $\tau$-tilting modules in $\mod T_n(A)$.

\item $T_n(T_n(A))$ is also a non-self-injective algebra which admits non-trivial Gorenstein projective $\tau$-tilting modules.

\item By using the construction above, one can get infinite number of non-self-injective algebras admitting non-trivial Gorenstein projective $\tau$-tilting modules.

\end{enumerate}

\end{example}

\end{document}